\def\centerarc[#1](#2)(#3:#4:#5)
\colorlet{Changes@Color}{orange}
\numberwithin{equation}{section}
\theoremstyle{plain}
\newtheorem{theorem}{Theorem}[section]
\newtheorem{proposition}{Proposition}[section]
\newtheorem{corollary}{Corollary}[section]
\theoremstyle{definition}
\newtheorem{definition}{Definition}[section]
\theoremstyle{remark}
\newtheorem{remark}{Remark}[section]
\def\la{\lambda}
\def\vfi{\varphi}
\def\d{\partial}
\def\bn{{\bf n}}
\def\ker{\hbox{ker}\,}
\def\image{\hbox{Im}\,}
\def\dim{\hbox{dim}\,}
\def\Lip{\hbox{Lip}\,}
\def\di{\hbox{div}\,}
\def\supp{\hbox{supp}\,}
\def\cD{\mathcal D}
\def\cH{\mathcal H}
\def\cN{\mathcal N}
\def\cM{\mathcal M}
\def\bN{\mathbb{N}}
\def\bR{\mathbb{R}}
\def\mvint_#1{\mathchoice
          {\mathop{\vrule width 6pt height 3 pt depth -2.5pt
                  \kern -9pt \intop}\nolimits_{\kern -3pt #1}}%
          {\mathop{\vrule width 5pt height 3 pt depth -2.6pt
                  \kern -6pt \intop}\nolimits_{#1}}%
          {\mathop{\vrule width 5pt height 3 pt depth -2.6pt
                  \kern -6pt \intop}\nolimits_{#1}}%
          {\mathop{\vrule width 5pt height 3 pt depth -2.6pt
                  \kern -6pt \intop}\nolimits_{#1}}}
\begin{document}

\title{A stationary heat conduction problem}

\author[{}]{Piotr Rybka{\footnote{P.Rybka@mimuw.edu.pl}}\ }
\author[{}]
{Anna Zatorska-Goldstein{\footnote{A.Zatorska-Goldstein@mimuw.edu.pl}}}

\affil[{}]
{{\small Institute of Applied
Mathematics and Mechanics\\ University of Warsaw\\ ul. Banacha 2,
02-097 Warsaw, Poland}}

\maketitle

\begin{abstract}
We study a basic linear elliptic equation on a lower dimensional rectifiable set $S$ in $\bR^N$  with the Neumann boundary data. Set $S$ is a support of a finite Borel measure $\mu$. We will use the measure theoretic tools to  interpret the equation and the Neumann boundary condition. For this purpose we recall the Sobolev-type space dependent on the measure $\mu$.  We establish existence and uniqueness of  weak solutions provided that an appropriate source term is given.
\end{abstract}

\smallskip

  {\small {\bf Key words and phrases:}  Neumann boundary problem, multijunction measures}

{\small{\bf Mathematics Subject Classification (2010)}:  35J20, 35J70, 28A33. }

\section{Introduction}

We study here an old problem of determining the stationary heat distribution in a conductor $S$ in the ambient space $\Omega\subset \bR^N$, when the conductivity tensor $A$ and the heat sources $Q$ are given. We assume that the conductor $S$ is insulated at the boundary of $\Omega$.
The main objective of the paper is to investigate the case of "thin" $S$, i.e. the case when the set is a low dimensional structure. Very roughly, our problem may be written as,
\begin{equation}\label{r1}
\begin{array}{ll}
\di \left(A \nabla u \right)+Q=0&\hbox{in } S,\\
\left[ A \nabla u, \nu \right]|_{\d\Omega} = 0&\hbox{on } S\cap \d\Omega,
\end{array}
\end{equation}
where $\nu$ is the outward normal unit vector of $\partial \Omega$.
Although the system of equations looks familiar the problem becomes non-standard, when $S$ is a low dimensional rectifiable set.
To give proper meaning to these equations, we follow the approach proposed by Bouchitt\'e, Buttazzo and Seppecher in \cite{energies} and we consider a measure $\mu$, supported in $S$, which is singular with respect to the Lebesgue measure in $\bR^N$. We assume further  that $Q$  and $A$ are, respectively, a scalar and a tensor valued measure; both measures are absolutely continuous with respect to $\mu$.
Such measure-oriented point of view  proved to be very fruitful when dealing with variational problems considered on low dimensional structures in $\bR^N$, for an example see the anisotropic  shape optimization problem considered by Bouchitt\'e and Buttazzo in \cite{BB-JEMS}. We refer the reader to \cite{fragala} for an introduction to the theory.  
The novelty of our paper is to consider the Neumann boundary condition \eqref{r1}${}_2$; this will  be explained later in more detail.

We employ Sobolev-type space $H^{1,p}_\mu$ introduced by Bouchitt\'e and Buttazzo, \cite{energies}. Its definition and basic properties are discussed in Section \ref{sobolev spaces}. In particular, we explain there the notion of the tangential gradient operator $\nabla_\mu $ for functions from $H^{1,p}_\mu$ and the notion of the tangent space of $\mu$ at $x$, denoted by $T_\mu(x)$.

A natural way to handle the question of existence of solutions to \eqref{r1} is to use a variational approach. One may expect \eqref{r1} to be the Euler-Lagrange equation of the natural energy functional
\begin{equation}\label{dE}
 E(u) = \frac12 \int_\Omega (A(x)\nabla u, \nabla u)\,d \mu - \langle Q, u\rangle, \qquad u \in C^\infty_c(\bR^N).
\end{equation}
Energy $E$ is well-defined for smooth functions, but this space is not suitable from the point of view of the calculus of variations. As in \cite{energies}, the relaxation of $E$, denoted by $E_\mu$, may be calculated explicitely, see (Section \ref{elliptic problem}, Proposition \ref{pr3.1}). It is defined as
$$
E_\mu(u) = \frac12 \int_\Omega (A_\mu(x)\nabla_\mu u, \nabla_\mu u)\,d \mu - \langle Q, u\rangle, \qquad u \in
H^{1,2}_\mu,
$$
where $A_\mu$ is given by \eqref{rA}.

\medskip

As for the measure $\mu$, we assume it to be a \textit{multijunction measure} in the sense considered by Bouchitt\'e and Fragal\`a in \cite{BF-JFA}. However, we point out that, contrary to the approach in the papers \cite{BF-SIAM} and \cite{BF-JFA}, we do not assume a Poincar\'e-type inequality to hold on $S$, allowing in particular occurence of multidimensional junctions. The issue of the Poincar\'e inequality is addressed further in Section \ref{sobolev spaces} and examples are given in Section \ref{examples}.

{Our precise assumptions, denoted further by \textbf{[S]}, are as follows:
\begin{itemize} \label{[S]}
\item $\Omega$ is a smooth bounded domain in $\bR^N$;
\item a compact set $S$ contained in $\overline{\Omega}$ is of the form
$$
\mathrm{supp}\, \mu = S = \bigcup_{j=1}^J S_j,
$$
where each $S_j$ is a compact manifold of class $\mathcal{C}^2$ of dimension $k_j<N$;
\item the measure $\mu$ is of the form
$$
\mu=\sum_{j=1}^J \mu_j, \qquad \text{where} \quad \mu_j := \cH^{k_j} \llcorner S_j, \qquad j=1, \ldots J;
$$
\item these measures are mutually singular, i.e.
\begin{equation*}
\mu_j(S_i) = 0 \qquad \text{for all $j \neq i$};
\end{equation*}
\item for each $i=1,\ldots,J$, we assume that the boundary of the manifold $S_i$ is contained in
$\d \Omega \cup \bigcup_{j\neq i} \bar S_j$;
\item the relative interior of  $S_i$ does not intersect the boundary $\d \Omega$.
\end{itemize}
The ambient space $\Omega$ is, in fact, a design region restricting the position of set $S$. Remark \ref{rem41} at the end of Section \ref{bd condition} addresses the lack of influence of $\Omega$ on the solutions to (\ref{r1}).}
\medskip

We employ the following definition:
\begin{definition}
Let us assume that the conditions \textbf{[S]} on measure $\mu$, and (\ref{z2}) on measure $Q$ hold.
We say that a function $u\in H^{1,2}_\mu
$ is a {\it distributional solution} to the Neumann boundary problem \eqref{r1} if
\begin{equation}\label{rel-r1}
 \int_\Omega (A_\mu \nabla_\mu u, \nabla_\mu \vfi)\,d\mu - \langle Q, \vfi \rangle =0\qquad\hbox{for all }
 \vfi \in C^\infty_c(\bR^N).
\end{equation}
\end{definition}

We stress that the meaning of the boundary condition (\ref{r1}${}_2$) requires  clarification. First of all, we should consider $A_\mu \nabla_\mu u$, not $A \nabla_\mu u$. Secondly, the way we defined the weak solutions implies that $[A_\mu \nabla_\mu u, \nu]$ is  the zero distrubution over $\partial\Omega$, i.e.  $[A_\mu \nabla_\mu u, \nu]\in \mathcal{D}(\partial\Omega)$. We will show that this  a nicer object, but it cannot be considered in a pointwise manner.

To put it differently,  $A_\mu \nabla_\mu u$ is at best an element of $L^2(\Omega, \mu)$.
However, when $u$ is a minimizer of $E_\mu$,  we can interpret $\di A_\mu \nabla_\mu u$ as a measure, which permits us to  use the theory developed by Chen and Frid, \cite{frid}. In this way
we are able to interpret $\left[ A_\mu \nabla_\mu u, \nu \right]$ as a normal trace of a measure with bounded divergence. The issue of boundary condition is discussed in Section \ref{bd condition}.

As for the source term $Q$, in order to ensure solvability of (\ref{r1}), it is clear that $Q$ must be perpendicular to the kernel of $\di A_\mu \nabla_\mu $. The source term $Q$ must be also somehow subordinate to $\mu$. The first guess might be that $Q\in (H^{1,2}_\mu
)^*$. However, as explained in Section \ref{bd condition}, the interpretation of the boundary condition requires $Q$ that be a measure of a finite total variation.

\medskip

Our main result is then stated in the following Theorem:
\begin{theorem}\label{tw0}
Let us assume that the conditions \textbf{[S]} on measure $\mu$, and (\ref{z2}) on measure $Q$ hold. In addition, the function $A:S\to M(N\times N)$ takes values in symmetric, non-negative matrices satisfying (\ref{z3}). Moreover, $\langle Q, h \rangle =0$ for all $h$ in the
kernel of $\nabla_\mu$. Then there exists a unique distributional solution to \eqref{r1} which is perpendicular (in the $L^2$ scalar product) to $\mathrm{ker}\, \nabla_\mu$.
Moreover,  the solution satisfies the boundary condition in the following sense: $\left[ A_\mu \nabla_\mu u, \nu \right]=0 $ as a continuous linear functional on $C^2(\partial \Omega)$.
\end{theorem}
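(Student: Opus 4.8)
The plan is to solve the variational problem by Hilbert-space methods on the orthogonal complement of $\ker\nabla_\mu$ and then to read off the boundary condition from the theory of divergence--measure fields. I regard $H^{1,2}_\mu$ as a Hilbert space and put $V:=(\ker\nabla_\mu)^{\perp}$; as $\nabla_\mu$ is bounded, $\ker\nabla_\mu$ is closed and the decomposition is legitimate. Since $\nabla_\mu h=0$ for $h\in\ker\nabla_\mu$, orthogonality to $\ker\nabla_\mu$ in $H^{1,2}_\mu$ and in $L^2(\Omega,\mu)$ coincide, so $V$ is exactly the $L^2$-orthogonal complement demanded in the statement. The decisive analytic ingredient is a Poincaré-type inequality on $V$, namely $\|u\|_{L^2(\Omega,\mu)}\le C\,\|\nabla_\mu u\|_{L^2(\Omega,\mu)}$ for $u\in V$, which is equivalent to $\nabla_\mu$ having closed range and follows from compactness of the embedding $H^{1,2}_\mu\hookrightarrow L^2(\Omega,\mu)$ --- were it false, a normalized sequence $u_n\in V$ with $\nabla_\mu u_n\to 0$ would converge in $L^2(\Omega,\mu)$ to a unit vector lying in $V\cap\ker\nabla_\mu=\{0\}$. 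The form $a(u,\vfi):=\int_\Omega (A_\mu\nabla_\mu u,\nabla_\mu\vfi)\,d\mu$ is then bounded on $V$ and, by the ellipticity of $A_\mu$ on the tangent space $T_\mu$ built into (\ref{z3}) combined with this inequality, coercive; under (\ref{z2}) the map $\vfi\mapsto\langle Q,\vfi\rangle$ is continuous on $H^{1,2}_\mu$, hence on $V$. The Lax--Milgram theorem then produces a unique $u\in V$ with $a(u,\vfi)=\langle Q,\vfi\rangle$ for all $\vfi\in V$, that is, the unique minimizer of $E_\mu$ over $V$.

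To obtain the full distributional identity (\ref{rel-r1}) and uniqueness, take $\vfi\in C^\infty_c(\bR^N)$ and decompose $\vfi=\vfi_1+\vfi_0$ with $\vfi_1\in V$ and $\vfi_0\in\ker\nabla_\mu$. Since $\nabla_\mu\vfi_0=0$ and, by hypothesis, $\langle Q,\vfi_0\rangle=0$, both integrals in (\ref{rel-r1}) are unchanged upon replacing $\vfi$ by $\vfi_1$; as $a(u,\vfi_1)=\langle Q,\vfi_1\rangle$ from the previous step, $u$ satisfies (\ref{rel-r1}), and it is orthogonal to $\ker\nabla_\mu$ by construction. If $u'$ is another solution orthogonal to $\ker\nabla_\mu$, then $w:=u-u'\in V$ obeys $a(w,\vfi)=0$ for all $\vfi\in C^\infty_c(\bR^N)$, hence --- using the density of $C^\infty_c(\bR^N)$ in $H^{1,2}_\mu$ together with the decomposition above --- for all $\vfi\in V$; the choice $\vfi=w$ forces $\nabla_\mu w=0$, so $w\in V\cap\ker\nabla_\mu=\{0\}$.

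For the boundary condition I pass to the $\bR^N$-valued finite measure $\Sigma:=(A_\mu\nabla_\mu u)\,\mu$. Because $A_\mu\nabla_\mu u$ is valued in $T_\mu$, one has $(A_\mu\nabla_\mu u,\nabla\vfi)=(A_\mu\nabla_\mu u,\nabla_\mu\vfi)$ $\mu$-almost everywhere, so the distributional divergence of $\Sigma$ is $\langle\di\Sigma,\vfi\rangle=-\int_\Omega(A_\mu\nabla_\mu u,\nabla\vfi)\,d\mu=-\langle Q,\vfi\rangle$ for every $\vfi\in C^\infty_c(\bR^N)$; thus $\di\Sigma=-Q$ has finite total variation, so that $\Sigma$ is a measure with bounded divergence to which the theory of \cite{frid} applies. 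By \textbf{[S]} its divergence charges no mass on $\d\Omega$, since $\mu\big(\bigcup_j\d S_j\big)=0$. The Gauss--Green formula of Chen and Frid then furnishes a normal trace $[A_\mu\nabla_\mu u,\nu]$, a continuous functional on $C^2(\d\Omega)$, determined by
$$
\langle [A_\mu\nabla_\mu u,\nu],\vfi\rangle=\int_\Omega \vfi\,d(\di\Sigma)+\int_\Omega (A_\mu\nabla_\mu u,\nabla\vfi)\,d\mu .
$$
The right-hand side equals $-\langle Q,\vfi\rangle+\int_\Omega(A_\mu\nabla_\mu u,\nabla_\mu\vfi)\,d\mu$, which vanishes by (\ref{rel-r1}) --- here it is crucial that the admissible $\vfi$ need not vanish on $\d\Omega$. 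As the restrictions $\vfi|_{\d\Omega}$, $\vfi\in C^\infty_c(\bR^N)$, are dense in $C^2(\d\Omega)$, the trace is identically zero, which is the asserted boundary condition.

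I expect the difficulty to be twofold. Analytically, the crux is the Poincaré inequality on $V$ --- equivalently the compactness of $H^{1,2}_\mu\hookrightarrow L^2(\Omega,\mu)$ --- in the presence of multidimensional junctions, where a global Poincaré inequality on $S$ is deliberately not imposed; this is precisely what secures coercivity once the whole (possibly large) kernel of $\nabla_\mu$ has been quotiented out. Geometrically, the subtle step is to apply the Chen--Frid normal-trace machinery to the field $\Sigma$ concentrated on the lower-dimensional set $S$ meeting $\d\Omega$: one must verify that $\Sigma$ is a bona fide measure with bounded divergence relative to $\Omega$, that its normal trace is insensitive to the absence of mass of $\di\Sigma$ on $\d\Omega$, and that the resulting object is genuinely a functional on $C^2(\d\Omega)$ rather than merely on test functions supported away from the contact set.
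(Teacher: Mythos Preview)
Your argument is correct and follows the same overall architecture as the paper---minimize $E_\mu$ on the $L^2$-orthogonal complement of $\ker\nabla_\mu$, derive the weak equation, then interpret the boundary condition via the Chen--Frid normal trace---but the implementation differs in three places.

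First, you obtain the minimizer by Lax--Milgram, while the paper uses the direct method (Theorem~\ref{tw1}); both rest on the same Poincar\'e-type estimate on $V$, so this is a matter of taste. Second, and more substantively, you derive that Poincar\'e estimate by the abstract contradiction argument via compactness of $H^{1,2}_\mu\hookrightarrow L^2(\Omega,\mu)$, whereas the paper states and uses an explicit piecewise Poincar\'e inequality on each ``connected component'' $\widetilde S_l$ (Theorem~\ref{tw-p}, formula~\eqref{poincare2}). Your route is cleaner but hides a step you should make explicit: under \textbf{[S]} and Proposition~\ref{pr-sob} the space $H^{1,2}_\mu$ is a finite product of classical Sobolev spaces on compact $C^2$ manifolds, so Rellich on each $S_j$ gives the needed compactness. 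The paper's explicit form, by contrast, identifies the projection $P$ in \eqref{P formula} and yields quantitative constants component by component. Third, for the boundary condition you show $\di\Sigma=-Q$ directly on $\bR^N$ (because \eqref{rel-r1} is tested against $C^\infty_c(\bR^N)$, not $C^\infty_c(\Omega)$) and then read off vanishing of the trace from the Gauss--Green identity; the paper instead works inside $\Omega$ and reaches the same conclusion by a cut-off/dominated-convergence argument (Theorem~\ref{tw b}). Your version is shorter; the paper's makes more transparent that only the variational identity on $\Omega$ and finiteness of $|Q|(\Omega)$ are used. Both are valid, and your closing remarks correctly isolate the two genuine analytic issues.
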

This Theorem follows from Theorem \ref{tw1} in Section \ref{elliptic problem} and Theorem \ref{tw b} in Section \ref{bd condition}. Uniqueness is addressed in Corollary \ref{wn-jed}. 
Examples of explicit boundary problems are discussed in Section \ref{examples}.

\section{Sobolev spaces $H^{1,p}_\mu$} \label{sobolev spaces}

\subsection{Definitions}

In the problem we study, $\Omega$ is the ambient space, containing $S$, which is  the support of a measure $\mu$. Set $S$ interpreted as the heat conductor. We study an elliptic problem in $S$, however, we are interested in the Neumann-type boundary conditions on $S\cap \d\Omega$.

We need properly defined Sobolev spaces to study weak solvability of differential equations like (\ref{r1}). We require a definition, which is general, yet easy to use. It seems that the Sobolev space $H^{1,p}_\mu$ introduced for any $p\in[1,\infty)$ by Bouchitte-Buttazzo-Seppecher in \cite{energies} is the right tool. We will briefly recall this definition. A recent discussion 
on other possible definitions of Sobolev space on $S$ is in \cite{louet}.

We begin with the notion of a tangent space to a measure, there is a number of ways to introduce it. We use the approach presented in \cite{energies} and further exposed in \cite{fragala}, \cite{mandallena} in a more reader friendly way.

We assume that $\mu$ is a positive, Radon measure on $\bR^N$. If we set
$$
\cN_\mu =\{v \in C^\infty_c(\bR^N):\ v = 0\hbox{ in }\supp \mu\},
$$ then we introduce
$$
\mathfrak{N}_\mu:= \{ w\in C^\infty_c(\bR^N; \bR^N):\ \exists v\in\cN_\mu, \ w =\nabla v \hbox{ in }\supp \mu\}.
$$
We define the multifunction $N_\mu: \bR^N \rightrightarrows\bR^N $ by
$$
N_\mu (x):= \{ w(x)\in \bR^N:\ w\in \mathfrak{N}_\mu\}.
$$
The vector subspace of $\bR^N$ is called  the {\it normal space to $\mu$ at $x$}. The vector space
$T_\mu(x)\subset \bR^N$ is defined as the orthogonal complement of $N_\mu (x)$ and it is called the  {\it tangent space to $\mu$ at $x$}.


We introduce
$\Pi_\mu(x,\cdot)$ as 
the orthogonal projection of $\bR^N$ onto $T_\mu(x)$. 
Due to measurability of $T_\mu(\cdot)$, see \cite{energies}, the projection is $\mu$-measurable.

For any $u\in \cD(\bR^n)$, we define,  for $\mu$-a.e.  $x\in \bR^n$, the tangential gradient
$$
\nabla _\mu u(x) = \Pi_\mu(x, \nabla u(x)).
$$
The space $H^{1,p}_\mu
$ is defined as a completion of $\cD(\bR^n)$ in the following norm
$$
\| u \|_{1,p,\mu} = \left(\| u \|^2_{L^p_\mu(\Omega)} + \|\nabla_\mu u \|^2_{L^p_\mu(\Omega)} \right)^{1/2}.
$$
It turns out to be a reflexive Banach space for $p \in (1, \infty)$.
We should stress that we could define the Sobolev space by weak derivatives, this is done in \cite{BBconverge}. This space is denoted by $W^{1,p}_\mu
$, in general $H^{1,p}_\mu
$ is closed subspace of  $W^{1,p}_\mu
$, see \cite{BBconverge} for details.

The weak convergence on space $H^{1,p}_\mu
$
is introduced in a natural way,
$$
u_k \rightharpoonup u\hbox{ weakly in }H^{1,p}_\mu
\Leftrightarrow
\left\{
\begin{array}{l}
 u_k\rightharpoonup u\hbox{ weakly in }L^p_\mu(\Omega),\\
 \nabla_\mu u_k\rightharpoonup \nabla_\mu u\hbox{ weakly in }L^p_\mu(\Omega).
\end{array}
\right.
$$

\medskip

In a typical situation we deal with, described in the Introduction, i.e. $\mu$ being supported on a finite union of smooth compact manifolds and agreeing local with the natural Hausdorff measure, the definitions of the tangent space and the Sobolev space $H^{1,p}_\mu
$ are intuitive.

\begin{proposition}(see \cite[Corollary 5.4]{energies})\label{pr-roz}
Let $S$ be a $C^2$ compact manifold in $\bR^N$ of dimension $k\le N$, let $T_S(x)$ be the tangent space at every $x\in S$, let $\mu = \cH^k \llcorner S$. Then, for every $p\in [1,+\infty)$ we have
$$
 T_\mu(x) = T_S(x)\qquad\mu-a.e.
$$
\end{proposition}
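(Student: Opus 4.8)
The plan is to prove the equivalent pointwise statement $N_\mu(x)=N_S(x)$ for $\cH^k$-a.e.\ $x\in S$, where $N_S(x)$ denotes the classical normal space, and then pass to orthogonal complements: since by definition $T_\mu(x)=N_\mu(x)^\perp$ and $T_S(x)=N_S(x)^\perp$, the two tangent spaces coincide a.e.\ as soon as the normal spaces do. The construction of $N_\mu$ does not involve $p$, so it suffices to argue once. Both $N_\mu(x)$ and $N_S(x)$ are linear subspaces of $\bR^N$ — for $N_\mu$ because $\cN_\mu$ is a linear space and $w\mapsto w(x)$ is linear — so I only need the two inclusions.

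First I would establish $N_\mu(x)\subseteq N_S(x)$, which in fact holds at \emph{every} $x\in S$. Let $w\in\mathfrak N_\mu$, so $w=\nabla v$ on $S$ for some $v\in C^\infty_c(\bR^N)$ with $v\equiv 0$ on $S=\supp\mu$. Fix $x\in S$ and $\tau\in T_S(x)$, and pick a $C^1$ curve $\gamma:(-\delta,\delta)\to S$ with $\gamma(0)=x$, $\dot\gamma(0)=\tau$. Differentiating $v(\gamma(t))\equiv 0$ at $t=0$ gives $\langle\nabla v(x),\tau\rangle=0$, whence $w(x)=\nabla v(x)\perp T_S(x)$, i.e.\ $w(x)\in N_S(x)$. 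Thus $N_\mu(x)\subseteq N_S(x)$, equivalently $T_S(x)\subseteq T_\mu(x)$.

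The substance is the reverse inclusion $N_S(x)\subseteq N_\mu(x)$, which I would prove for $\cH^k$-a.e.\ $x$. The natural idea is to build local defining functions: since $S$ is $C^2$, after a rotation it is locally a graph $z=g(y)$ with $y\in\bR^k$, $z\in\bR^{N-k}$, $g\in C^2$, and $\nabla g(y_0)=0$ at the base point $x_0=(y_0,z_0)$; then the $C^2$ functions $\phi_j(y,z)=z_j-g_j(y)$ vanish on $S$ and satisfy $\nabla\phi_j(x_0)=e_{k+j}$, so $\nabla\phi_1(x_0),\dots,\nabla\phi_{N-k}(x_0)$ span $N_S(x_0)$; multiplying by a cutoff $\eta\in C^\infty_c$ equal to $1$ near $x_0$ and supported in the chart preserves vanishing on $S$ and the gradients at $x_0$. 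The main obstacle is that $\cN_\mu$ requires $C^\infty$ functions, while the $\phi_j$ are only $C^2$, and one cannot in general smooth a function while keeping its exact zero set: an exact $C^\infty$ defining function with nonzero normal gradient need not exist at \emph{every} point of a merely $C^2$ manifold (one checks that if the graph departs from its osculating paraboloid at a fractional order strictly between $2$ and $3$, the smooth constraint forces the normal component of $\nabla v(x_0)$ to vanish).

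To repair this I would exploit that a $C^2$ graph has, at $\cH^k$-a.e.\ point, a genuine second-order expansion $g_j(y)=g_j(y_0)+\tfrac12\langle D^2g_j(y_0)(y-y_0),(y-y_0)\rangle+o(|y-y_0|^2)$, and replace $\phi_j$ by the smooth model $\psi_j(y,z)=z_j-\big(g_j(y_0)+\tfrac12\langle D^2g_j(y_0)(y-y_0),(y-y_0)\rangle\big)$ times the cutoff; then $\psi_j\in C^\infty_c$, $\nabla\psi_j(x_0)=e_{k+j}$, and $\psi_j$ vanishes on $S$ up to the $o(|y-y_0|^2)$ error. Passing to the measure-theoretic formulation, one verifies that at $\cH^k$-a.e.\ $x_0$ this residual is negligible against the blow-ups of $\mu=\cH^k\llcorner S$, so that the gradients of (a suitable exact correction of) the $\psi_j$ realize all of $N_S(x_0)$ inside $N_\mu(x_0)$; equivalently, one invokes the identification of $T_\mu$ with the approximate tangent space of the rectifiable measure from \cite{energies}, which for the $C^1$ manifold $S$ equals $T_S(x)$ at $\cH^k$-a.e.\ $x$. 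Together with the first inclusion this gives $N_\mu(x)=N_S(x)$, hence $T_\mu(x)=T_S(x)$, for $\cH^k$-a.e.\ $x$ and every $p\in[1,\infty)$. I expect the delicate point to be exactly the control of the second-order truncation error: showing that the $\cH^k$-null exceptional set, where no clean normal gradient can be realized by a $C^\infty$ function, causes no loss in the a.e.\ statement.
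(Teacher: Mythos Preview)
The paper gives no proof of this proposition at all: it is stated as a citation of \cite[Corollary~5.4]{energies} and used as a black box. So there is no ``paper's own proof'' to compare against; your attempt is an independent effort to reprove a result the authors simply import.

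That said, your argument has a genuine gap in the reverse inclusion $N_S(x)\subseteq N_\mu(x)$. You correctly identify the obstruction: $\cN_\mu$ demands $C^\infty$ test functions, while a merely $C^2$ manifold need not admit smooth defining functions with nonvanishing normal gradient at every point (your own heuristic with a $|t|^{5/2}$-type graph is exactly right). But your proposed repair does not close the gap. The functions $\psi_j$ built from the second-order Taylor polynomial of $g$ at $x_0$ do \emph{not} vanish on $S$, only approximately; hence $\psi_j\notin\cN_\mu$ and their gradients at $x_0$ contribute nothing to $N_\mu(x_0)$ by definition. The phrase ``a suitable exact correction of the $\psi_j$'' is precisely the missing step, and you do not supply it; there is no obvious way to correct $\psi_j$ to vanish \emph{exactly} on $S$ while remaining $C^\infty$ and keeping the gradient at $x_0$, since that is the original obstruction restated.

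Your fallback --- ``equivalently, one invokes the identification of $T_\mu$ with the approximate tangent space of the rectifiable measure from \cite{energies}'' --- is correct but circular for your purposes: that identification (together with the standard fact that the approximate tangent space of a $C^1$ submanifold equals the classical one $\cH^k$-a.e.) \emph{is} the content of Corollary~5.4 in \cite{energies}. If you are willing to cite that, you are done in one line, exactly as the paper does; if you want a self-contained argument, you must actually prove that identification, and your $\psi_j$ construction does not do so. In short: the first inclusion is fine, the difficulty in the second is real and well diagnosed, but not resolved.
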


\begin{proposition}(see \cite[Lemma 2.2]{BF-JFA})\label{pr-sob}
Assume that $S$ is a finite union of $C^2$ compact manifolds, $S = \bigcup_{j=1}^J$, dim$\,S_j =k_j\le N$ and the measures  $\mu_j = \cH^{k_j}\llcorner S_j$ are mutually singular. We set $\mu = \sum_{j=1}^J \mu_j$ and we denote by $T_{S_j}(x)$ the tangent space to $S_j$ at $x$. Then, for every  $p\in [1,+\infty)$ we have
$$
 T_\mu(x) = T_{S_j}(x)\qquad\mu_j-a.e.
$$
If $S=\mathrm{supp}\, \mu \subset \overline{\Omega}$  and $\Omega$ is an open domain in $\bR^N$ and $\mu( S \cap \partial \Omega ) = 0$,  then
$$
H^{1,p}_\mu
=  \{ u \in L^p(S,\mu):\ u|_{S_j}\in H^{1,p}(S_j),\ j=1, \ldots,  J\}.
$$
\end{proposition}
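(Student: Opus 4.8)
The plan is to derive the tangent-space identity first and then use it to compare the two function spaces; throughout write $V$ for the space on the right-hand side of the second display. \textbf{Tangent space.} Fix $j$. Since $\supp\mu_j=S_j\subseteq S=\supp\mu$, every $v\in\cN_\mu$ also lies in $\cN_{\mu_j}$, so $\mathfrak N_\mu\subseteq\mathfrak N_{\mu_j}$ and hence $N_\mu(x)\subseteq N_{\mu_j}(x)$ for \emph{every} $x$; taking orthogonal complements and invoking Proposition \ref{pr-roz} yields $T_\mu(x)\supseteq T_{S_j}(x)$ everywhere. For the reverse inclusion I would use mutual singularity: since $\mu_j\big(\bigcup_{i\neq j}S_i\big)\le\sum_{i\neq j}\mu_j(S_i)=0$ and each $S_i$ is compact, for $\mu_j$-a.e. $x$ there is a neighbourhood $U\ni x$ with $U\cap S_i=\emptyset$ for all $i\neq j$, so $\supp\mu\cap U=S_j\cap U$. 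Given $\xi\in N_{\mu_j}(x)$, represented by $w=\nabla v$ on $S_j$ with $v\in\cN_{\mu_j}$, localise with a cutoff $\chi\in\cD(U)$, $\chi\equiv1$ near $x$: as $v=0$ on $S_j$ one has $\nabla(\chi v)=\chi\nabla v$ on $S_j$ while $\chi v$ vanishes on all of $S$, so $\chi v\in\cN_\mu$, $\chi w\in\mathfrak N_\mu$, and its value at $x$ is $\xi$. Hence $N_{\mu_j}(x)\subseteq N_\mu(x)$ and $T_\mu(x)=T_{S_j}(x)$ for $\mu_j$-a.e. $x$.

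\textbf{The inclusion $H^{1,p}_\mu\subseteq V$.} Let $u\in H^{1,p}_\mu$ and choose $u_n\in\cD(\bR^N)$ with $u_n\to u$ and $\nabla_\mu u_n\to\nabla_\mu u$ in $L^p_\mu$. Restricting to $S_j$ and using the tangent-space identity, $\nabla_\mu u_n=\Pi_\mu(x,\nabla u_n)$ agrees $\mu_j$-a.e. with the intrinsic tangential gradient $\nabla_{S_j}(u_n|_{S_j})$. Thus $(u_n|_{S_j})$ is Cauchy in $H^{1,p}(S_j)$, and by completeness its limit $u|_{S_j}$ lies in $H^{1,p}(S_j)$ with $\nabla_{S_j}(u|_{S_j})=(\nabla_\mu u)|_{S_j}$. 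As $\mu=\sum_j\mu_j$ with the pieces mutually singular, this holds for each $j$ and gives $u\in V$; here the hypotheses $S\subset\overline\Omega$ and $\mu(S\cap\partial\Omega)=0$ ensure no mass is lost to $\partial\Omega$ when splitting $L^p_\mu$ over the $S_j$.

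\textbf{The inclusion $V\subseteq H^{1,p}_\mu$.} Given $u\in V$, I would first approximate each $u|_{S_j}$ by $\psi_j\in C^\infty(S_j)$ in $H^{1,p}(S_j)$, using density of smooth functions on the compact $C^2$ manifold $S_j$; then extend $\psi_j$ to $\Psi_j\in\cD(\bR^N)$ supported in a thin tubular neighbourhood of $S_j$; and finally assemble the $\Psi_j$ into a single test function through a partition of unity subordinate to these neighbourhoods. Away from the junction set $\Sigma=\bigcup_{i\neq j}(S_i\cap S_j)$ this reproduces $\psi_j$ and its tangential gradient on $S_j$, so the approximation is controlled there, and a diagonal argument over the approximation parameters then produces the required sequence in $\cD(\bR^N)$.

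\textbf{Main obstacle.} The delicate point is the behaviour near $\Sigma$: there the extensions $\Psi_i,\Psi_j$ need not be compatible, and the partition-of-unity cutoffs generate tangential-gradient terms concentrated on $\Sigma$. The whole inclusion therefore rests on showing that $\Sigma$ is negligible for $H^{1,p}_\mu$-approximation, i.e. that one may interpolate or cut off across the junctions with an $L^p_\mu$-gradient error tending to zero. This is precisely where the multijunction structure enters: $\Sigma$ is a $\mu$-null, lower-dimensional set, and the estimate reduces to a $p$-capacity bound for $\Sigma$ inside each $S_j$. I expect this junction estimate to be the only genuinely difficult step, and it is the hypothesis on the junction dimensions that must make the capacity contribution vanish in the limit.
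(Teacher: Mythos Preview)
The paper gives no proof of this proposition; it is simply quoted from \cite[Lemma 2.2]{BF-JFA}. Your argument for the tangent-space identity is clean and correct, and the inclusion $H^{1,p}_\mu\subseteq V$ follows exactly as you say.

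The gap you flag in $V\subseteq H^{1,p}_\mu$ is real, but it is more serious than you suggest: under the hypotheses \emph{as stated} the inclusion is simply false, so no capacity argument can rescue it. Take the Y-shape of Section~\ref{examples}: three unit segments $S_1,S_2,S_3$ meeting at the origin, $\mu_j=\cH^1\llcorner S_j$ mutually singular, $p=2$. The function $u=\chi_{S_1}$ lies in $V$, yet if $u_n\in\cD(\bR^N)$ approximated $u$ in $H^{1,2}_\mu$ then each $u_n|_{S_j}$ would converge in $H^{1,2}(S_j)$, hence uniformly on the one-dimensional $S_j$; continuity of $u_n$ at $0$ then forces $u_n(0)\to 1$ (from $S_1$) and $u_n(0)\to 0$ (from $S_2$) simultaneously. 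Thus $u\notin H^{1,2}_\mu$, and indeed the paper itself uses continuity at the junction in Proposition~5.1. Your instinct that ``the hypothesis on the junction dimensions must make the capacity contribution vanish'' is exactly right, but no such hypothesis appears in the proposition: the result in \cite{BF-JFA} carries an extra structural assumption on the junctions (essentially that each $S_i\cap S_j$ has zero $p$-capacity in the adjacent pieces, or equivalently is stated for $W^{1,p}_\mu$ rather than $H^{1,p}_\mu$) which the present paper has suppressed. Without it, only the inclusion $H^{1,p}_\mu\subseteq V$ and the tangent-space identity survive.
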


\subsection{On validity of the Poincar\'e inequality} \label{Poincare}

The important tool in the analysis of the well posedness of the problem \eqref{r1} is the Poincar\'e inequality. In case $S$ is a smooth, compact manifold of dimension $k < N$, the global Poincar\'e inequality holds, i.e. there is $C_P>0$ such that
\begin{equation}\label{point-g}
\| u -  u_S\|_{L^2(\Omega,\mu)}^2 \le C_p  \| \nabla_\mu u\|^2_{L^2(\Omega,\mu)}\qquad u\in H^{1,2}_\mu
\quad u_S = \frac{1}{\mu(S)} \int_\Omega u \,d\mu .
\end{equation}
However, we are particularly interested in junctions between several thin structures of possibly different dimension, i.e. when $S$ is a finite union of compact manifolds of possibly different Hausdorff dimensions. In such a case \eqref{point-g} may not be true.

One of the consequences of the global Poincar\'e inequality \eqref{point-g} is the following property:
\begin{equation} \label{connect prop}
u \in H^{1,2}_\mu(\Omega), \quad \nabla_\mu u =0\quad \mu-\text{a.e.} \Rightarrow u=\text{const.} \quad \mu-\text{a.e.}
\end{equation}
In other words
$$
\mathrm{dim}\, \mathrm{ker}\, \nabla_\mu = \mathrm{dim}\{ u \in H^{1,2}_\mu(\Omega) \colon \nabla_\mu u = 0\}= 1,
$$
so $\mathrm{ker}\, \nabla_\mu$ contains only constant functions. In a general situation of a multijunction measure this is not necessarily true, i.e. it may happen that
 $$
\mathrm{dim}\, \mathrm{ker}\, \nabla_\mu >1.
$$
The important feature is ''how big'' the junction area is. Every space that supports a global Poincar\'e inequality is connected. We expect that removing a set of zero capacity from the space should not affect the Poincar\'e inequality. In particular, removing a set of zero capacity should not disconnect the space. Examples show that our expectation need not be true. We consider the situation presented on Figure \ref{antenka}. Since the capacity of a point on a plane is equal to zero, the global Poincar\'e inequality cannot be valid for this set. A similar situation occurs for the set presented on Figure \ref{dwa kola}. We do not want to recall the definition of a (Sobolev) capacity here (we refer the interested reader for instance to the book \cite{bjoerns}, chapter 4). However, in the situation presented on the Figure \ref{antenka}, 
the argument is simple and straightforward.

\begin{figure}          
\centering
\begin{tikzpicture}[scale=0.5,rotate=90]
\begin{scope}
\clip (-4,0) circle [x radius=0.7, y radius=3];
\filldraw[fill=lightgray!50!white] (-5,-3)--(-5,3)--(-3,3)--(-3,-3)--cycle;
\end{scope}

\draw[very thin] (-4,0) circle [x radius=0.7, y radius=3];
\draw[thick] (-4,0) -- (-1,0);
\draw[very thick] (-4,-3) arc [start angle=270, end angle=90, x radius=0.7, y radius=3];

\filldraw (-4,0) circle [radius=0.04];
\end{tikzpicture}
\caption{} \label{antenka}
\end{figure}

\begin{figure}[h]                   
\centering
\begin{tikzpicture}[scale=0.6]

\filldraw[fill=lightgray,opacity=0.7] (-3,0) circle [x radius=3, y radius=0.8];
\filldraw[fill=lightgray,opacity=0.5] (0,3) circle [x radius=0.8, y radius=3];

\draw[thick] (0,0) arc [start angle=270, end angle=90, x radius=0.8, y radius=3];
\draw[thick] (0,0) arc [start angle=360, end angle=180, x radius=3, y radius=0.8];
\filldraw (0,0) circle (2pt);
\end{tikzpicture}
\caption{} \label{dwa kola}
\end{figure}

Consider the unit disk on a plane $D = \{ (x,y) \in \bR^2 \colon x^2 + y^2 < 1\}$ and the function
$$
f(x,y) = -\log \log (1+ (x^2 + y^2)^{-1/2}).
$$
This is an unbounded function which belongs to the Sobolev space $H^{1,2}(D)$. Set
\begin{equation*}
\zeta(t) =
\begin{cases}
0 & t<0,\\
t & t \in [0,1], \\
1 & t > 1.
\end{cases}
\end{equation*}
Then $\zeta \circ f \in H^{1,2}(D)$ as well. Define a sequence of functions $f_n $ as
\begin{equation*}
f_n(x,y) = \zeta(f(x,y)-n+1).
\end{equation*}
Since the $H^{1,2}(D)$ energy of $f$ is finite, the $H^{1,2}(D)$-energy of $f_n$ tends to zero.

\medskip

Now, we consider a metric space $(S,\mu)$, such that $S= X \cup_P Y$, where
$$
X = \{ (x,y,z) \in \mathbb{R}^3 \colon z=0;\ x^2 + y^2 \leq 1 \}
$$
and $\mu|_X$ is a $2$-dimensional Hausdorff measure;
$$
Y = \{ (x,y,z) \in \mathbb{R}^3 \colon x=y=0; z \in [0,1] \}
$$
and $\mu|_Y$ is a $1$-dimensional Hausdorff measure and $P=(0,0,0)$.

\begin{proposition}
 Let us suppose that $S$ and $\mu$ are defined above. Then, $\dim\ker \nabla_\mu >1$.
\end{proposition}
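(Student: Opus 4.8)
The plan is to produce a second element of $\ker\nabla_\mu$, linearly independent from the constants — namely the function $u_0$ that vanishes on the disk $X$ and equals $1$ on the segment $Y$. First I would fix the functional setting using Proposition \ref{pr-sob}: choosing $\Omega$ to be an open ball with $S$ in its interior (so that $\mu(S\cap\d\Omega)=0$), and noting that $X$ is a compact $\cC^2$ manifold with boundary, $Y$ a compact $\cC^2$ manifold with boundary, and that $\cH^2\llcorner X$ and $\cH^1\llcorner Y$ are mutually singular, the proposition gives
\[
H^{1,2}_\mu=\{u\in L^2(S,\mu):\ u|_X\in H^{1,2}(X),\ u|_Y\in H^{1,2}(Y)\}
\]
together with $T_\mu(x)=T_X(x)$ for $\mu$-a.e.\ $x\in X$ and $T_\mu(x)=T_Y(x)$ for $\mu$-a.e.\ $x\in Y$. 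Consequently $\nabla_\mu u=0$ $\mu$-a.e.\ is equivalent to $u|_X$ and $u|_Y$ being locally constant; as $X$ and $Y$ are each connected, this means that $u$ is constant on $X$ and constant on $Y$ separately.

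With this description the conclusion is immediate: the function $u_0$ defined by $u_0\equiv0$ on $X$ and $u_0\equiv1$ on $Y$ is constant on each piece, hence $u_0\in H^{1,2}_\mu$ and $\nabla_\mu u_0=0$. Since $\mu(X)=\cH^2(X)=\pi>0$ and $\mu(Y)=\cH^1(Y)=1>0$, the function $u_0$ is not equal $\mu$-a.e.\ to any constant, so the constant function $1$ and $u_0$ are linearly independent elements of $\ker\nabla_\mu$. Therefore $\dim\ker\nabla_\mu\ge2>1$ (in fact $\ker\nabla_\mu$ is exactly two-dimensional, parametrised by the pair of values taken on $X$ and $Y$).

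The only delicate point — and the reason the capacity computation preceding the statement is needed — is the justification of Proposition \ref{pr-sob} at the junction: $H^{1,2}_\mu$ is the completion of $\cD(\bR^3)$, whose members are continuous across $P$, so it is not a priori obvious that a function jumping at $P$ lies in the completion. The hard part is precisely to realise such a function as an $H^{1,2}_\mu$-limit of smooth ones, and this is where the cutoffs built above enter: they provide $\phi_n\in H^{1,2}(X)$, equal to $1$ on a neighbourhood of $P$, supported in disks shrinking to $P$, with $\|\phi_n\|_{L^2(X)}\to0$ and $\|\nabla\phi_n\|_{L^2(X)}\to0$, i.e.\ $P$ has zero $2$-capacity in $X$. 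After a routine mollification I would assemble $u_n\in\cD(\bR^3)$ whose restriction to $X$ is $\phi_n$ and whose restriction to $Y$ is $\equiv1$ (the two prescriptions agree at $P$ since $\phi_n(P)=1$, so such smooth $u_n$ exist). Then $u_n|_X\to0$ in $H^{1,2}(X)$ while $u_n|_Y\equiv1$, whence $u_n\to u_0$ in $H^{1,2}_\mu$; this proves $u_0\in\ker\nabla_\mu$ directly, without invoking Proposition \ref{pr-sob}. The remaining verifications — the two convergences and the linear independence — are routine.
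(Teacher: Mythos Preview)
Your proposal is correct and follows essentially the same route as the paper: the paper constructs the approximants $u_n$ by setting $u_n|_X=\zeta(f-n+1)=f_n$ and $u_n|_Y\equiv 1$, observes that $u_n\in H^{1,2}_\mu$, and shows $\|u_n-u_o\|_{H^{1,2}_\mu}\to 0$ for the same target function $u_o$ you call $u_0$. Your write-up is in fact a bit more careful than the paper's on one point: the paper asserts $u_n\in H^{1,2}_\mu$ from finiteness of the integrals, while you note explicitly that, because $f_n\equiv 1$ near $P$, the piecewise prescription extends (after mollification) to genuine test functions in $\cD(\bR^3)$, which is what membership in the completion really demands.
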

\begin{proof}
 We define a sequence of functions $u_n : S \to \mathbb{R}$ as
\begin{equation*}
u_n(s) =
\begin{cases}
\zeta(f(x,y)-n+1) & s=(x,y,0) \in X;\\
1 & s=(0,0,z)\in Y
\end{cases}
\end{equation*}
and a function
\begin{equation*}
u_o(s) =
\begin{cases}
0 & s=(x,y,0) \in X;\\
1 & s=(0,0,z)\in Y.
\end{cases}
\end{equation*}

Since
$$
\int_S |u_n(s)| \, d\mu = \int_X |u_n(s)| \, d\mathcal{H}^2 + \int_Y |u_n(s)| \, d\mathcal{H}^1
$$
and
$$
\int_S |\nabla_\mu u_n(s)| \, d\mu  = \int_X |\nabla_\mu u_n(s)| \, d\mathcal{H}^2 + \underbrace{\int_Y | \nabla_\mu u_n(s)| \, d\mathcal{H}^1}_{=0},
$$
then it follows that $u_n\in H^{1,2}_\mu$. By the construction, we also have
$$
\| u_n -u_o \|_{H^{1,2}_\mu} \to 0,
$$
which implies $u_o \in H^{1,2}_\mu$ which in turn falsifies the Poincar\'e inequality \eqref{point-g} on $S$ and it shows that \eqref{connect prop} in this situation is not true. As a result, our claim holds.
\end{proof}

\bigskip

Let us denote
$$
V = \ker \nabla_\mu \subset H^{1,2}_\mu \subset L^2(\Omega, \mu).
$$
This is a finite dimensional subspace of $L^2(\Omega, \mu)$.
We define
\begin{equation} \label{P}
P  \colon H^{1,2}_\mu \to V
\end{equation}
as be the orthogonal projection onto $V$ (with respect to $L^2$ scalar product).
Set $d=\dim V$.  We can split the set $\{ 1,\ldots,J \}$ from the condition \textbf{[S]} into a final family of disjoint sets of indices
$$
\{ 1,\ldots,J \} = J_1 \cup \ldots \cup J_d, \quad d\leq J, \quad J_l \cap J_m = \emptyset,\quad l\neq m.
$$
The sets $J_l$, $l=1,\ldots, d$ are defined in the following way. The set
$$
\widetilde{S}_l = \bigcup_{i \in J_l} S_i
$$
is such that
$$
\chi_{\widetilde{S}_l} \in \ker \nabla_\mu,
$$
and $J_l$ are "maximal" in the sense that  for any proper subset $K \subset J_l$,
$$
\text{the characteristic function of } \bigcup_{i \in K } S_i  \text{ does not belong to $V$}.
$$

Then the projection $P$ in \eqref{P} can be expressed as
\begin{equation}\label{P formula}
P = \sum_{l=1}^d P_l, \quad \text{where} \quad P_l \, u= \chi_{\widetilde{S}_l} \mvint_{\widetilde{S}_l} u \,  d\mu.
\end{equation}

For a multijunction measure satisfying \textbf[S] the following  weaker version of the Poincar\'e inequality holds.
\begin{theorem}\label{tw-p}
Let us suppose that a multijunction measure satisfies \textbf[S]. Then there is $C_l>0$ such that for
$u\in H^{1,2}_\mu$
\begin{equation} \label{poincare2}
\sum_{i\in J_l} \int_\Omega | u - P_l u |^2 \, d\mu_i  \le C_l
\sum_{i\in J_l} \int_\Omega | \nabla_\mu u |^2 \, d \mu_i,  \quad \text{for $l=1, \ldots, d$},
\end{equation}
where $P_l$ is defined in \eqref{P formula}.
\end{theorem}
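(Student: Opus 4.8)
The plan is to prove the inequality separately for each fixed $l$, after recasting it as an ordinary zero-mean Poincaré inequality on the substructure $\widetilde{S}_l$. Introduce the measure $\nu_l := \sum_{i\in J_l}\mu_i$, supported on $\widetilde{S}_l$; it is again a multijunction measure, and by Proposition \ref{pr-sob} the tangential gradient $\nabla_{\nu_l}$ of a restriction $u|_{\widetilde{S}_l}$ agrees $\nu_l$-a.e.\ with $\nabla_\mu u$, since on each piece both reduce to the intrinsic gradient $\nabla_{S_i}$. Writing $\bar u := \mvint_{\widetilde{S}_l} u\,d\nu_l$ and using $\sum_{i\in J_l}\int_\Omega f\,d\mu_i = \int_{\widetilde{S}_l} f\,d\nu_l$, the asserted inequality \eqref{poincare2} is exactly
\[
\int_{\widetilde{S}_l}|u-\bar u|^2\,d\nu_l \;\le\; C_l \int_{\widetilde{S}_l}|\nabla_{\nu_l}u|^2\,d\nu_l, \qquad u\in H^{1,2}_{\nu_l}.
\]

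The decisive ingredient is that the maximality built into the definition of $J_l$ forces $\ker\nabla_{\nu_l}$ to contain only constants. Indeed, if $w\in\ker\nabla_{\nu_l}$ then $\nabla_{S_i}w=0$ on each $S_i$, $i\in J_l$, so $w$ equals a constant $c_i$ on the connected manifold $S_i$ (we may assume each $S_i$ connected, splitting it into components otherwise). Were $w$ not globally constant, we could pick a value $c$ attained by $w$ and set $K=\{i\in J_l: c_i=c\}$, a nonempty proper subset of $J_l$. Choosing a Lipschitz $\phi$ with $\phi(c)=1$ and $\phi(c_i)=0$ for $c_i\neq c$, the chain rule $\nabla_{\nu_l}(\phi\circ w)=\phi'(w)\nabla_{\nu_l}w=0$ shows $\chi_{\bigcup_{i\in K}S_i}=\phi\circ w\in\ker\nabla_{\nu_l}$, and, extending by zero across the junctions separating $\widetilde{S}_l$ from the rest of $S$, we obtain $\chi_{\bigcup_{i\in K}S_i}\in V$. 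This contradicts the maximality of $J_l$, so $\dim\ker\nabla_{\nu_l}=1$.

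With this reduction in hand I would run the standard compactness argument by contradiction. Suppose \eqref{poincare2} fails for some $l$; then there are $u_n\in H^{1,2}_{\nu_l}$ with $\mvint_{\widetilde{S}_l}u_n\,d\nu_l=0$, $\int_{\widetilde{S}_l}|u_n|^2\,d\nu_l=1$, and $\int_{\widetilde{S}_l}|\nabla_{\nu_l}u_n|^2\,d\nu_l\to 0$. Each restriction $u_n|_{S_i}$ ($i\in J_l$) is bounded in $H^{1,2}(S_i)$, so by Rellich--Kondrachov on the compact $C^2$ manifold $S_i$ a subsequence converges in $L^2(S_i)$; as there are finitely many pieces, a diagonal subsequence gives $u_n\to w$ in $L^2(\nu_l)$. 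Since also $\nabla_{\nu_l}u_n\to 0$ in $L^2(\nu_l)$, the sequence $(u_n)$ is Cauchy in the $H^{1,2}_{\nu_l}$-norm, whence by completeness $w\in H^{1,2}_{\nu_l}$ with $\nabla_{\nu_l}w=0$, i.e.\ $w\in\ker\nabla_{\nu_l}$. By the kernel reduction $w$ is constant, while $\mvint_{\widetilde{S}_l}w\,d\nu_l=0$ forces $w\equiv 0$, contradicting $\int_{\widetilde{S}_l}|w|^2\,d\nu_l=1$. Hence \eqref{poincare2} holds for each $l$.

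I expect the kernel-reduction step to be the main obstacle, since it is where the purely combinatorial splitting $\{1,\dots,J\}=J_1\cup\cdots\cup J_d$ must be turned into an analytic statement. The two delicate points are the validity of the Lipschitz chain rule for functions in $H^{1,2}_{\nu_l}$, guaranteeing that a level-set characteristic function of a kernel element is again in the kernel, and the justification that extending such a characteristic function by zero across the junctions linking $\widetilde{S}_l$ to the remainder of $S$ keeps it in $H^{1,2}_\mu$; this last point is exactly where the zero-capacity nature of those junctions, encoded in $\chi_{\widetilde{S}_l}\in V$, enters. The remaining ingredients---Rellich compactness on each $S_i$ and completeness of the measure-Sobolev space---are routine.
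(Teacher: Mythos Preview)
The paper states Theorem~\ref{tw-p} without proof: after the statement there is only a remark on regularity consequences and the section ends. There is therefore no authors' argument to compare against.

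Your approach is the natural one and is essentially correct. The reduction to a zero-mean Poincar\'e inequality on $(\widetilde S_l,\nu_l)$, the contradiction-by-compactness argument via Rellich on each $S_i$ and completeness of $H^{1,2}_{\nu_l}$, and the identification of $\ker\nabla_{\nu_l}$ with the constants are all sound. The two points you flag as delicate are indeed the only nontrivial ones. The Lipschitz chain rule in $H^{1,2}_{\nu_l}$ is unproblematic: it holds for smooth functions and passes to the completion. For the extension step, the cleanest way to close it is to note that once $\phi\circ w\in H^{1,2}_{\nu_l}$ with vanishing gradient, it is approximated in $H^{1,2}_{\nu_l}$ by restrictions of smooth $\psi_n\in C^\infty_c(\bR^N)$; multiplying by the $H^{1,2}_\mu$-approximants of $\chi_{\widetilde S_l}$ (which exist precisely because $\chi_{\widetilde S_l}\in V$) and using the product rule for the tangential gradient produces a sequence in $H^{1,2}_\mu$ converging to $\chi_{\bigcup_{i\in K}S_i}$. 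Alternatively, one can bypass the extension entirely by observing from Proposition~\ref{pr-sob} that $\ker\nabla_\mu$ is exactly the span of the $\chi_{\widetilde S_m}$, so the decomposition into the $J_l$ is already characterized, and the maximality of $J_l$ is equivalent to $\dim\ker\nabla_{\nu_l}=1$ by construction.
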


\medskip

Let us mention in advance that, the  regularity of solutions depends in an essential way  on whether the set $S$ supports the inequality \eqref{point-g} or  \eqref{poincare2}.
It is known that if a metric measure space (the set $S$ may be treated as such) supports the Poincar\'e inequality \eqref{point-g}, one can expect solutions to be continuous. The weaker version is not enough to ensure this, as shown on examples presented in Section \ref{examples}.




\section{The elliptic problem} \label{elliptic problem}

We would like to use the calculus of variations to establish existence of solutions to (\ref{r1}) in the space $H^{1,2}_\mu
$. For this purpose we recall the definition  of $E(u)$ for
$H^{1,2}_\mu
.$
We have to specify the assumptions on $A$ and $Q$.

In principle, $u$ is an element of $L^2(\Omega,\mu)$, so we must specify properties of $Q$ appropriately. This is why we make the following assumption,
\begin{equation}\label{z2}
\begin{split}
& \text{$Q$ is a measure of a finite total variation supported in $\Omega$,}\\
 &\text{absolutely continuous with respect to $\mu$,}\\
 &\text{and there is  $f \in L^2(\mu)$ such that } Q = f \mu.
 \end{split}
\end{equation}
This assumption in particular makes elements of $H^{1,2}_\mu$ measurable with respect to $Q$. 


Now, we specify our assumptions on $A$. We consider 
\begin{equation}\label{Alin}
A\in L^\infty(\Omega; M(N\times N),\mu)\hbox{ and for a.e. $x$ the matrix $A(x)$ is symmetric.}
\end{equation}

We have to present the positivity of $A$ in a way suitable for dealing with $\nabla_\mu u$ in the tangent space to $\mu$. We require that at $\mu$-almost every $x\in \Omega$, we have
\begin{equation}\label{z3}
 (A(x) \xi, \xi) \ge \la |\xi|^2,\qquad \hbox{for all }\xi \in T_\mu(x). 
\end{equation}

After these preparations, we state our first result.
\begin{proposition}\label{pr3.1}
 Let us suppose that  $A$ satisfies (\ref{Alin}) and for each $x\in\Omega$ matrix $A(x)$ is
 non-negative. Suppose further that the measure $\mu$ satisfies the assumptions \textbf{[S]}. Then, the relaxation of $E$, defined by (\ref{dE}), is given by
 $$
 E_\mu(u) = \frac12 \int_\Omega (A_\mu(x) \nabla_\mu u, \nabla_\mu u)\,d\mu - \int _\Omega fu\, d\mu.
 $$
Here, matrix $A_\mu(x)$ is defined by
\begin{equation}\label{rA}
 A_\mu(x) = 
 (I - \Pi(x,\cdot))A(I - \Pi(x,\cdot)),
\end{equation}
where $(I - \Pi(x,\cdot))$ is the orthogonal projection of $\bR^N$ onto $T_\mu(x)$,
hence $A\mu(\cdot)$ is $\mu$-measurable.
\end{proposition}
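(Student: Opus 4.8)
The plan is to realise $E_\mu$ as the relaxation (the sequentially lower semicontinuous envelope with respect to strong $L^2(\Omega,\mu)$ convergence) of $E$, by reducing to the general relaxation theorem for $\mu$-integral functionals proved by Bouchitt\'e, Buttazzo and Seppecher in \cite{energies} and then performing the explicit quadratic computation. First I would split $E = F + L$, where $F(u) = \frac12 \int_\Omega (A\nabla u, \nabla u)\, d\mu$ is the quadratic part and $L(u) = -\langle Q, u\rangle = -\int_\Omega f u\, d\mu$ is the linear part. By \eqref{z2} we have $f \in L^2(\mu)$, so $L$ is continuous on $L^2(\Omega,\mu)$ and hence passes unchanged to any relaxing sequence; consequently, at any $u \in H^{1,2}_\mu$, the relaxation of $E$ equals the relaxation of $F$ evaluated at $u$ plus the term $L(u)$. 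This reduces the statement to identifying the relaxation of the purely quadratic functional $F$ on $H^{1,2}_\mu$.

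For $F$ I would invoke the relaxation result of \cite{energies} for functionals of the form $\int_\Omega j(x,\nabla u)\,d\mu$ with a non-negative normal integrand $j$ that is convex in the gradient variable. Here $j(x,\xi) = \frac12 (A(x)\xi, \xi)$ is measurable in $x$ by \eqref{Alin}, convex and non-negative in $\xi$ by the assumed non-negativity of $A(x)$, so the hypotheses hold; the multijunction structure of $\mu$ together with the $\mu$-measurability of $x \mapsto T_\mu(x)$ (recorded after the definition of $\Pi_\mu$ and made explicit for our $\mu$ in Propositions \ref{pr-roz} and \ref{pr-sob}) ensure applicability. The theorem then yields that the relaxed functional is $\int_\Omega j_\mu(x, \nabla_\mu u)\, d\mu$, where for $z \in T_\mu(x)$ the relaxed tangential integrand is the infimal projection onto the normal space,
\[
 j_\mu(x, z) = \inf\left\{ \tfrac12 (A(x)(z+\xi), z+\xi) : \xi \in N_\mu(x) \right\}.
\]
The replacement of $\nabla u$ by $\nabla_\mu u$ is precisely the content of the lower bound: only the tangential components of the gradients are controlled by the $H^{1,2}_\mu$ norm, and weak lower semicontinuity of the convex form applies to them. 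The passage to $j_\mu$ reflects the fact that the normal components of the gradients along $S$ can be prescribed freely by perturbing within $\cN_\mu$, whose gradients span $N_\mu(x)$ by the very definition of $\mathfrak{N}_\mu$.

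It then remains to compute $j_\mu$ and to recognise the matrix of \eqref{rA}. Writing $A(x)$ in block form relative to $\bR^N = T_\mu(x) \oplus N_\mu(x)$ and minimising the quadratic expression over $\xi \in N_\mu(x)$, the Euler equation fixes the optimal normal component and produces a quadratic form in $z \in T_\mu(x)$; identifying its matrix with $A_\mu(x) = (I - \Pi(x,\cdot)) A (I - \Pi(x,\cdot))$ gives $j_\mu(x,z) = \frac12 (A_\mu(x) z, z)$. The $\mu$-measurability of $A_\mu$ is inherited from that of the projection $\Pi_\mu = I - \Pi$, and restoring the linear term $L$ produces the asserted formula for $E_\mu$.

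The hard part will be the sharp two-sided matching, and in particular the recovery (lim sup) sequence: given $u \in H^{1,2}_\mu$ one must exhibit $u_n \in \cD(\bR^N)$ with $u_n \to u$ in $L^2(\Omega,\mu)$ and $\nabla_\mu u_n \to \nabla_\mu u$, whose normal gradients along $S$ asymptotically realise the minimiser $\xi(x) \in N_\mu(x)$ of the infimal projection, so that $\limsup_n F(u_n) \le \int_\Omega j_\mu(x,\nabla_\mu u)\,d\mu$. This forces one to combine the density of smooth functions defining $H^{1,2}_\mu$ with controlled perturbations drawn from $\mathfrak{N}_\mu$, and to exercise care at the junctions where the $S_j$ meet and the tangent-normal splitting is discontinuous; the mutual singularity of the $\mu_j$ and the local description of $H^{1,2}_\mu$ in Proposition \ref{pr-sob} (valid since $\mu(S \cap \partial\Omega)=0$ by \textbf{[S]}) are what make this gluing possible. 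The remaining ingredients---continuity of $L$, the finite-dimensional block minimisation, and the measurability claim---are routine.
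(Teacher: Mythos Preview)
Your outline contains a genuine gap. You invoke the relaxation theorem of \cite{energies} directly for the integrand $j(x,\xi)=\tfrac12(A(x)\xi,\xi)$, asserting that measurability, convexity and non-negativity suffice. They do not: that theorem (see \cite[Theorem 3.1]{energies}) requires a two-sided $p$-growth bound, in particular a coercivity estimate $j(x,\xi)\ge c_0|\xi|^2$. Since $A(x)$ is assumed only non-negative, not positive definite on all of $\bR^N$, this lower bound fails, and the theorem is not applicable as stated. This is not a technicality you can absorb into the recovery-sequence discussion; without coercivity the identification of the relaxed integrand via the normal infimal projection is not guaranteed. Relatedly, your sentence ``the Euler equation fixes the optimal normal component'' presupposes that the quadratic form restricted to $N_\mu(x)$ is non-degenerate, which again need not hold for merely non-negative $A$.

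The paper's proof is built precisely around this obstacle. It first treats (Step~1) the case of a genuinely positive-definite matrix $B$, where \cite{energies} applies and the infimal projection can be computed. Then (Step~2) it manufactures such a $B$ by adding to $A$ a symmetric $W$ supported on $\ker A(x)$, so that $B=A+W$ is uniformly elliptic. Step~3 shows that this perturbation is invisible after relaxation, namely $B_\mu=A_\mu$. Finally (Step~4) a sandwich argument closes the gap: since $E\le E^B$ one has $\bar E\le E^B_\mu$, while $E_\mu\le E$ and lower semicontinuity of $E_\mu$ give $E_\mu\le \bar E$; combined with $E^B_\mu=E_\mu$ from Step~3 this forces $\bar E=E_\mu$. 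Your proposal is missing this auxiliary construction and the comparison argument, which are the substantive content of the proof.
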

\begin{remark}
 In other words, $A_\mu(x)$ is the projection of $A(x)$ onto the tangent space $T_\mu(x)$.
\end{remark}
\begin{proof}
{\it Step 1.} Let us set $F^B(x,p) :=(B(x)p,p)$,
where $B\in L^\infty(\Omega,M(N\times N),\mu)$ and for $\mu$-a.e. $x\in \Omega$ matrix $B(x)$ is symmetric and positive definite, then $E^B(u)= \frac12 \int_\Omega F^B(x,\nabla u)\,d\mu$.
By \cite{energies}, $E^B_\mu$,   the relaxation of $E^B$, is given by
$$
E^B_\mu(u) = \frac12 \int_\Omega F^B_\mu(x,\nabla_\mu u)\,d\mu,
$$
where
$$
  F^B_\mu(x,p) = \inf \{ F^B(x,p+ \xi): \quad \xi \in (T_\mu (x))^\perp\}.
$$
Let us suppose that $e_i$, $i=1,\ldots, N-k$ span $(T_\mu (x))^\perp$. We may choose them so that
$$
(B e_i, e_j) = 0,\quad i\neq j, \qquad (B e_i, e_i)>0, \quad i=1,\ldots,
N-k.
$$
Then, the minimization problem in the definition of $F_\mu$ may be written as
$$
\inf \{ F^B(x,p + \sum_{i=1}^{N-k}t_i e_i):\ t_i\in\bR, \ i =1,\ldots, N-k\}.
$$
A simple differentiation yields optimality conditions
$$
t^o_i = -(Bp, e_i)/ (B e_i, e_i), \quad i=1,\ldots, l.
$$
We easily see that
$$
F^B_\mu(x,p) = (B_\mu(x)p,p),
$$
where $B_\mu(x)$ is given by
\begin{equation}\label{rB}
 B_\mu(x) = B(x) - \sum_{i=1}^{N-k} \frac{B(x) e_i(x)\otimes B(x) e_i(x)}{(B(x) e_i(x), e_i(x))},
\end{equation}
where $e_i$, $i=1,\ldots,N-k$ are linearly independent and span $(T_\mu(x))^\perp$. It is easy to check that
$$
B_\mu(x) = (I - \Pi(x,\cdot))B (I - \Pi(x,\cdot)).
$$
Hence, due to measurability of projection $\Pi$ the mapping $x\mapsto B_\mu(x)$ is $\mu$-measurable.

{\it Step 2.} We assumed that $A$ satisfies (\ref{z3}), so if we set $F(x,p)= (A(x)p,p)$, then this intergrand does not satisfy the lower estimate
$$
F(x,p)\ge c_0 |p|^2
$$
for any $c_0>0$, which is in the assumptions of \cite[Theorem 3.1]{energies}. For this reason we take any symmetric matrix $W\in L^\infty(\Omega, M(N \times N),\mu)$, whose kernel at $x$ is the image of $A(x)$. Moreover, we require that there is  $c_0>0$ such that
$$
(W(x)\xi,\xi) \ge c_0 |\xi|^2,
$$
for all $\xi \in \ker A(x)$.
Then, $B(x) = A(x) + W(x)$ is symmetric and for $\xi$ in the image of $A(x)$ and $\zeta\in\ker A(x)$ we have
$$
(B(x),\xi+\zeta,\xi+\zeta)  
= (A(x)\xi,\xi) +
( W(x)\zeta,\zeta) ,
$$
because the image of a symmetric matrix is perpendicular to its kernel. Hence,
$$
(B(x),\xi+\zeta,\xi+\zeta) \ge \lambda |\xi|^2 + c_0 |\zeta|^2 \ge \min\{\lambda, c_0\}
|\xi+\zeta|^2.
$$
and formula (\ref{rB}) applies to $B= A+W$.

{\it Step 3.} We claim that if $B= A+W$, then $B_\mu = A_\mu$, where $A_\mu$ is given by (\ref{rA}). Indeed, 
the summation in (\ref{rB}) may be split, the first $l\le N-k$ vectors belong to $\image A\cap (T_\mu)^\perp,$ while vectors $e_i$, $i=l+1,\ldots, N-k$ span the kernel of $A$. Then, $B_\mu$ takes the following form,
$$
B_\mu = (A +W) - \sum_{i=1}^l\frac{A e_i\otimes A e_i}{(A e_i, e_i)}
 - \sum_{i=l+1}^{N-k} \frac{W e_i\otimes W e_i}{(W e_i, e_i)} = A_\mu + W_\mu.
$$
It is easy to see that for any $\xi\in \bR^N$ we have $B_\mu\xi \in T_\mu$. Moreover, one can check that if $\zeta\in \ker A$, then $W_\mu \zeta =0 $ and if $\xi\in \image A$, then $W_\mu \xi =0$. Thus, our claim follows.

{\it Step 4.} We will check that the lower semicontinuous envelope of $E$ is $E_\mu$.

By the definition we have $E(u) \le E^B(u)$. If we denote by bar the lower semicontinuous envelope, then we see,
$$
\bar E(u) \le \bar  E^B(u) = E^B_\mu (u).
$$
If we look at the definition of $E_\mu$, then we see that $E_\mu(u) \le E(u)$. Moreover, $E_\mu$ is lower semicontinuous, hence
$$
E_\mu(u)\le  \bar E(u) \le \bar  E^B(u) = E^B_\mu (u) = E_\mu(u) ,
$$
where the last equality follows from step 3. Finally,
$$
E_\mu(u) =\bar E(u).
$$
\end{proof}

\bigskip
We have to describe $\ker \di A_\mu \nabla_\nu$, what is necessary, before we can specify $Q$. Actually, we prove:
\begin{proposition}
 $\ker \di A_\mu \nabla_\nu =  \ker \nabla_\nu$.
\end{proposition}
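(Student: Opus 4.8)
The plan is to prove the two inclusions separately, the content being entirely in the reverse direction. The inclusion $\ker \nabla_\mu \subseteq \ker \di A_\mu \nabla_\mu$ is immediate: if $\nabla_\mu u = 0$ $\mu$-a.e., then $A_\mu \nabla_\mu u = 0$ as well, so the bilinear form in \eqref{rel-r1} vanishes against every test function, which is exactly what it means for $u$ to lie in $\ker \di A_\mu \nabla_\mu$.

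For the reverse inclusion I would interpret $u \in \ker \di A_\mu \nabla_\mu$ through the weak formulation, namely
$$
\int_\Omega (A_\mu \nabla_\mu u, \nabla_\mu \vfi)\,d\mu = 0 \qquad \text{for all } \vfi \in C^\infty_c(\bR^N).
$$
Since $A_\mu$ is the compression of the bounded matrix $A$ (see \eqref{rA}), we have $A_\mu \in L^\infty(\Omega;M(N\times N),\mu)$, so the functional $\vfi \mapsto \int_\Omega (A_\mu \nabla_\mu u, \nabla_\mu \vfi)\,d\mu$ is continuous on $H^{1,2}_\mu$ by the Cauchy--Schwarz inequality. Because $H^{1,2}_\mu$ is \emph{by definition} the completion of $C^\infty_c(\bR^N)$ in the $\|\cdot\|_{1,2,\mu}$-norm, the displayed identity extends to all $\vfi \in H^{1,2}_\mu$. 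In particular I may take $\vfi = u$, obtaining $\int_\Omega (A_\mu \nabla_\mu u, \nabla_\mu u)\,d\mu = 0$.

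The key step is then to exploit the coercivity \eqref{z3} on the tangent space. By the very definition of the tangential gradient, $\nabla_\mu u(x) \in T_\mu(x)$ for $\mu$-a.e.\ $x$. Writing $P_T(x)$ for the orthogonal projection onto $T_\mu(x)$, the formula \eqref{rA} reads $A_\mu = P_T A P_T$, so for $\xi \in T_\mu(x)$ we have $P_T(x)\xi = \xi$ and hence $(A_\mu(x)\xi,\xi) = (A(x)\xi,\xi) \ge \la |\xi|^2$ by \eqref{z3}. Applying this pointwise with $\xi = \nabla_\mu u(x)$ and integrating gives
$$
0 = \int_\Omega (A_\mu \nabla_\mu u, \nabla_\mu u)\,d\mu \ge \la \int_\Omega |\nabla_\mu u|^2\,d\mu \ge 0,
$$
which forces $\nabla_\mu u = 0$ $\mu$-a.e., i.e.\ $u \in \ker \nabla_\mu$. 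Combining the two inclusions yields the claimed equality.

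I do not expect a serious obstacle; this is the standard ``test against $u$ and use ellipticity'' argument. The only point deserving genuine care is the passage from test functions in $C^\infty_c(\bR^N)$ to an arbitrary $\vfi \in H^{1,2}_\mu$ (and in particular $\vfi = u$), which must be justified by the continuity of the bilinear form together with the density built into the definition of $H^{1,2}_\mu$. A second, purely bookkeeping point is verifying that the restricted positivity of $A$ on $T_\mu(x)$ transfers to $A_\mu$ on the same subspace; this is precisely where it matters that $A_\mu$ is the compression of $A$ to $T_\mu(x)$ and that $\nabla_\mu u$ already lives in $T_\mu(x)$, so that the possible degeneracy of $A$ in the normal directions never enters the estimate.
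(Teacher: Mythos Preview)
Your proof is correct and follows essentially the same route as the paper: extend the weak identity by density from $C^\infty_c(\bR^N)$ to $H^{1,2}_\mu$, test against $u$ itself, and then invoke the coercivity \eqref{z3} on $T_\mu(x)$ to force $\nabla_\mu u=0$. The only cosmetic difference is that the paper starts from the distributional formulation with the \emph{full} gradient $\nabla\vfi$ and first observes that, since $A_\mu\nabla_\mu u(x)\in T_\mu(x)$, the normal component of $\nabla\vfi$ drops out so one may replace $\nabla\vfi$ by $\nabla_\mu\vfi$; you bypass this by starting directly from the $\nabla_\mu\vfi$ form, which is consistent with \eqref{rel-r1}.
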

\begin{proof}
The inclusion $\supset$ is obvious. Let us suppose that $u \in H^{1,2}_\mu$ satisfies
$$
\di (A_\mu \nabla_\mu u \mu) = 0\qquad\hbox{in }\cD'(\bR^N).
$$
In other words,
$$
\int_\Omega A_\mu\nabla_\mu u \nabla \vfi \,d\mu= 0\qquad\hbox{for all }\vfi\in \cD(\bR^N).
$$
We notice that $\nabla \vfi (x) = \nabla_\mu \vfi(x) + (Id - \Pi_\mu(x, \nabla \vfi))$ for $\mu$-a.e. $x\in S$. Since $A_\mu \nabla_\mu u(x) \in T_\mu(x)$ for $\mu$-a.e. $x\in S$, we have
$$
0= \int_\Omega A_\mu\nabla_\mu u \nabla \vfi \,d\mu=
\int_\Omega A_\mu\nabla_\mu u \nabla_\mu \vfi \,d\mu
$$
We may take a sequence $\vfi_n\in \cD(\bR^N)$ such that $\nabla_\mu \vfi_n $ converges to
$\nabla_\mu u$. This yields,
$$
0= \int_\Omega A_\mu\nabla_\mu u \nabla_\mu u\,d\mu.
$$
Combining this with ellipticity of $A_\mu$, see (\ref{z3}), yields the claim.
\end{proof}

\medskip

After these preparations we can state one of our main results.

\begin{theorem}\label{tw1}
Let us assume that the conditions:  \textbf{[S]} on $\mu$ and (\ref{z2}) on $Q$ as well as (\ref{z3}) on $A$ hold.
In addition, we assume
that $\langle Q, h \rangle =0$ for all $h$ in the kernel of $\nabla_\mu$. Then, there exists a minimizer $u$ of the functional $E_\mu$ defined on the linear subspace
$$
H= \{ u \in H^{1,2}_\mu: \quad Pu =0\},
$$
where $P$ is the orthogonal projection onto $\ker \nabla_mu$ described by \eqref{P formula}.
\end{theorem}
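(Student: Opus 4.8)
The plan is to use the direct method of the calculus of variations on the restricted space $H$. The functional $E_\mu$ is quadratic with a linear perturbation, so the strategy is standard: show that $E_\mu$ is coercive on $H$, establish weak lower semicontinuity, and then extract a minimizing sequence whose weak limit is the desired minimizer. The one genuinely non-trivial ingredient — and the place where the hypotheses on $Q$ and the choice of the subspace $H$ really earn their keep — is coercivity, because the bilinear form $\int_\Omega (A_\mu \nabla_\mu u, \nabla_\mu u)\,d\mu$ only controls $\nabla_\mu u$ (by ellipticity \eqref{z3}), not $u$ itself, and we are in a setting where the global Poincaré inequality \eqref{point-g} is allowed to fail.

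**First I would** verify coercivity. Take $u \in H$, so $Pu = 0$, meaning $P_l u = 0$ for each $l = 1,\ldots,d$ by \eqref{P formula}. Summing the weak Poincaré inequality \eqref{poincare2} of Theorem \ref{tw-p} over $l$ and using $P_l u = 0$ gives
$$
\int_\Omega |u|^2\,d\mu = \sum_{l=1}^d \sum_{i\in J_l}\int_\Omega |u - P_l u|^2\,d\mu_i \le C \sum_{i}\int_\Omega |\nabla_\mu u|^2\,d\mu_i = C\int_\Omega |\nabla_\mu u|^2\,d\mu,
$$
with $C = \max_l C_l$. Combined with the ellipticity lower bound $(A_\mu(x)\xi,\xi) \ge \lambda|\xi|^2$ for $\xi \in T_\mu(x)$ (note $\nabla_\mu u(x) \in T_\mu(x)$, so \eqref{z3} applies), this shows the quadratic part of $E_\mu$ dominates $\tfrac{\lambda}{1+C}\|u\|_{1,2,\mu}^2$. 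For the linear term, \eqref{z2} gives $|\langle Q, u\rangle| = |\int_\Omega f u\,d\mu| \le \|f\|_{L^2(\mu)}\|u\|_{L^2(\mu)}$, so Cauchy-Schwarz and Young's inequality absorb it, yielding $E_\mu(u) \ge c\|u\|_{1,2,\mu}^2 - C'$ on $H$, hence coercivity.

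**Next I would** record that $H$ is a closed subspace of the reflexive Banach space $H^{1,2}_\mu$: it is the kernel of the bounded linear projection $P$, hence closed, and is itself reflexive. Coercivity then bounds any minimizing sequence $(u_n)$ in $H^{1,2}_\mu$-norm, so by reflexivity a subsequence converges weakly, $u_n \rightharpoonup u$, and the weak limit lies in $H$ since $H$ is weakly closed (being closed and convex). For weak lower semicontinuity, the quadratic form $u \mapsto \int_\Omega (A_\mu \nabla_\mu u, \nabla_\mu u)\,d\mu$ is convex (as $A_\mu$ is symmetric non-negative by construction \eqref{rA}) and strongly continuous, hence weakly lower semicontinuous; the linear term $\langle Q, \cdot\rangle$ is weakly continuous. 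Therefore $E_\mu(u) \le \liminf_n E_\mu(u_n) = \inf_H E_\mu$, so $u$ is a minimizer.

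**The main obstacle** is purely the coercivity step, and within it the subtlety is that one cannot invoke the standard Poincaré inequality \eqref{point-g} — it is precisely the hypothesis of the paper that \eqref{point-g} may fail when $\ker\nabla_\mu$ is higher-dimensional. The remedy is that $H$ is built by quotienting out exactly the kernel of $\nabla_\mu$ via the projection $P$, and the weaker component-wise inequality \eqref{poincare2} is tailored so that, once the averages $P_l u$ are killed, control over $\nabla_\mu u$ returns control over $u$ on each cluster $\widetilde{S}_l$. The hypothesis $\langle Q, h\rangle = 0$ for $h \in \ker\nabla_\mu$ does not enter the existence argument on $H$ directly, but it guarantees the minimizer on $H$ is genuinely a critical point of $E_\mu$ on all of $H^{1,2}_\mu$ (the Euler-Lagrange / distributional equation \eqref{rel-r1} is unaffected by the kernel directions), which is what connects this existence result back to the full problem \eqref{r1} in Theorem \ref{tw0}.
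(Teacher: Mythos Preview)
Your proposal is correct and follows essentially the same approach as the paper: the direct method, with coercivity on $H$ coming from ellipticity \eqref{z3} together with the component-wise Poincar\'e inequality \eqref{poincare2} (after $Pu=0$ kills the averages), then reflexivity for compactness and convexity for weak lower semicontinuity. Your presentation is in fact somewhat more explicit than the paper's (e.g.\ in justifying that $H$ is weakly closed and in spelling out the lower semicontinuity argument), and your closing observation that the hypothesis $\langle Q,h\rangle=0$ is not actually used in the existence step on $H$ is accurate.
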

\begin{proof}
We have already computed $E_\mu$, the relaxation of $E$. Actually, we study minimizers of $E_\mu$.
Let us suppose that $\{ u_k\}\subset H$ is a sequence minimizing $E_\mu$, i.e.,
$$
\lim_{k\to \infty} E_\mu(u_k) = \inf\{ E_\mu(v):\ v \in H
\} =:R.
$$
We may assume that for all $k\in \bN$ we have,
$$
R\le \frac12  \int_\Omega (A_\mu(x)\nabla_\mu u_k, \nabla_\mu u_k)\,d \mu - \int_\Omega fu_k\, d\mu \le R+1.
$$
Due to (\ref{z3}), we end up with,
\begin{equation}\label{r3}
 \frac{\lambda}{2} \int_\Omega | \nabla_\mu u_k|^2 \,d\mu - \int_\Omega fu_k\, d\mu \le R+1.
\end{equation}
Now, the Theorem \ref{tw-p} and $P u_k =0$ yield that there exists $C>0$ such that
$$
\int_{\Omega} (u^2_k + | \nabla_\mu u_k|^2) \,d\mu \le C \int_{\Omega} | \nabla_\mu u_k|^2 \,d\mu.
$$
Combining this with (\ref{r3}) and Young's inequality yields,
$$
(1-\epsilon\frac{C}{2\lambda}) \| u_k \|^2_{H^{1,2}_\mu}\le \frac{C}{\lambda 2\epsilon} \| f \|^2_{L^2(\Omega, \mu)} +R+1.
$$
In other words the minimizing sequence is bounded in $H^{1,2}_\mu
$. Due to the results of \cite{energies} we deduce that there is a weakly convergent subsequence (not relabeled), with limit $u$. Now, we invoke the lower semicontinuity results so that we deduce,
$$
\varliminf_{k\to\infty} E_\mu(u_k) \ge \frac12 \int_\Omega (A_\mu(x)\nabla_\mu u, \nabla_\mu u)\,d \mu - \int_\Omega fu\, d\mu.
$$
Uniqueness of minimizers will be treated separately. 
\end{proof}

Establishing the Euler-Lagrange equations requires further assumptions on $A_\mu$ and $\mu$.

\begin{proposition}\label{pr-EL}
 Let us suppose that $u$ is a minimizer of $E_\mu$ on $H^{1,2}_\mu
 $, then the following weak form of Euler-Lagrange equation holds,
\begin{equation}\label{r-EL}
 \int_\Omega (A_\mu \nabla_\mu u, \nabla_\mu \vfi)\,d\mu -\int_\Omega f \vfi\, d\mu =0\qquad\hbox{for all }
 \vfi \in C^1_0(\bR^N).
\end{equation}
\end{proposition}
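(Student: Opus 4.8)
The plan is to run the classical first-variation argument, exploiting that $E_\mu$ is a quadratic functional. I fix a minimizer $u \in H^{1,2}_\mu$ and an arbitrary test function $\vfi \in C^1_0(\bR^N)$. For every $t \in \bR$ the function $u + t\vfi$ is again an element of $H^{1,2}_\mu$, so by minimality the real function $g(t) := E_\mu(u + t\vfi)$ attains its minimum at $t = 0$. Since $\nabla_\mu$ is linear and $A_\mu$ is symmetric, $g$ is a quadratic polynomial in $t$, and the vanishing of $g'(0)$ will be exactly the identity \eqref{r-EL}.

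First I would check that $\vfi \in C^1_0(\bR^N)$ indeed belongs to $H^{1,2}_\mu$ and that its tangential gradient is $\nabla_\mu \vfi = \Pi_\mu(\cdot, \nabla\vfi)$. Mollifying, $\vfi_\varepsilon = \vfi * \rho_\varepsilon \in \cD(\bR^N)$ converges to $\vfi$ in $C^1$, hence (as $\mu$ is finite and $\vfi$ has compact support) $\vfi_\varepsilon \to \vfi$ in $L^2(\Omega,\mu)$ and $\nabla_\mu \vfi_\varepsilon = \Pi_\mu(\cdot, \nabla\vfi_\varepsilon) \to \Pi_\mu(\cdot, \nabla\vfi)$ in $L^2(\Omega,\mu)$; thus $\{\vfi_\varepsilon\}$ is Cauchy in the norm $\|\cdot\|_{1,2,\mu}$ and $\vfi \in H^{1,2}_\mu$ with the claimed gradient. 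Consequently $u + t\vfi \in H^{1,2}_\mu$, with $\nabla_\mu(u + t\vfi) = \nabla_\mu u + t\,\nabla_\mu \vfi$, and all the integrals below are finite because $A_\mu \in L^\infty(\Omega,\mu)$, while $\nabla_\mu u,\ \nabla_\mu\vfi \in L^2(\Omega,\mu)$ and $f,\vfi \in L^2(\Omega,\mu)$ by \eqref{z2}.

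Expanding, and using the symmetry of $A_\mu$ (which follows from the symmetry of $A$ and of the orthogonal projection $I - \Pi$ in \eqref{rA}), I obtain
\begin{equation*}
g(t) = E_\mu(u) + t\left(\int_\Omega (A_\mu \nabla_\mu u, \nabla_\mu \vfi)\,d\mu - \int_\Omega f\vfi\,d\mu\right) + \frac{t^2}{2}\int_\Omega (A_\mu \nabla_\mu \vfi, \nabla_\mu \vfi)\,d\mu .
\end{equation*}
Because $u$ minimizes $E_\mu$, we have $g(t) \ge g(0)$ for all $t \in \bR$, so $g'(0) = 0$, which is precisely
\begin{equation*}
\int_\Omega (A_\mu \nabla_\mu u, \nabla_\mu \vfi)\,d\mu - \int_\Omega f\vfi\,d\mu = 0 .
\end{equation*}
As $\vfi \in C^1_0(\bR^N)$ was arbitrary, \eqref{r-EL} follows. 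The only step demanding any care is the verification that every $\vfi \in C^1_0$ lies in $H^{1,2}_\mu$ with $\nabla_\mu\vfi = \Pi_\mu(\cdot,\nabla\vfi)$, i.e. the approximation argument of the second paragraph; once admissibility of the competitors $u + t\vfi$ is secured, the rest is a one-line differentiation of a quadratic. I do not expect to need the ``further assumptions on $A_\mu$ and $\mu$'' alluded to before the statement for this weak form; those enter only when one wishes to represent $\di(A_\mu\nabla_\mu u\,\mu)$ as a genuine measure.
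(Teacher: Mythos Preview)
Your argument is correct and is essentially the same first-variation approach the paper uses: the paper expands $E_\mu(u+\vfi)\ge E_\mu(u)$ and then replaces $\vfi$ by $t\vfi$, which is exactly your computation of $g(t)$ and $g'(0)=0$. Your extra care in verifying $C^1_0(\bR^N)\subset H^{1,2}_\mu$ via mollification is a welcome detail the paper leaves implicit.
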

\begin{proof}
 Since $u$ is a minimizer, then for any test function $\vfi\in H^{1,2}_\mu
 $, (in particular we may take $\vfi \in C^1_0(\bR^N)$), we have
 $$
 E_\mu(u) \le E_\mu(u + \vfi).
 $$
Thus,
$$
0 \le \int_\Omega (A_\mu \nabla_\mu u, \nabla_\mu \vfi)\,d\mu +  \frac12 \int_\Omega (A_\mu \nabla_\mu \vfi, \nabla_\mu \vfi)\,d\mu -
\int_\Omega f \vfi\, d\mu .
$$
After replacing $\vfi$ with $t \vfi$, where $t$ is real, we will deduce that (\ref{r-EL}) holds.
\end{proof}

\begin{corollary}\label{wn-jed} Let us suppose that the assumptions of the previous theorem hold.
If $u_1$ and $u_2$ are two minimizers of $E_\mu$, which are perpendicular to ker$\nabla_\mu$, then $u_1 = u_2$.
\end{corollary}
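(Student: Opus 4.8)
The plan is to exploit the convexity of $E_\mu$ together with the ellipticity of $A_\mu$ \emph{on the tangent space}, and then to use orthogonality to $\ker\nabla_\mu$ to pin down the difference exactly. Both $u_1$ and $u_2$ realize the same minimal value $R:=\inf\{E_\mu(v):v\in H\}$, and $H=\{Pu=0\}$ is a linear subspace, so the whole segment $u_t:=(1-t)u_1+t\,u_2$, $t\in[0,1]$, lies in $H$. Writing $w:=u_2-u_1$, we have $\nabla_\mu u_t=\nabla_\mu u_1+t\,\nabla_\mu w$, so by bilinearity of the quadratic form and linearity of the source term, $t\mapsto E_\mu(u_t)$ is a polynomial of degree at most two in $t$, with leading coefficient
$$
\tfrac12\int_\Omega (A_\mu\nabla_\mu w,\nabla_\mu w)\,d\mu .
$$
Since $A_\mu=(I-\Pi)A(I-\Pi)$ is symmetric and, by the non-negativity of $A$, positive semidefinite, this polynomial is convex; as $E_\mu(u_0)=E_\mu(u_1)=R$ and $R$ is the minimum over $H$ (so $E_\mu(u_t)\ge R$ for all $t$), it must be constant on $[0,1]$. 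Hence the leading coefficient vanishes:
$$
\int_\Omega (A_\mu\nabla_\mu w,\nabla_\mu w)\,d\mu = 0 .
$$

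The key step is to upgrade this to $\nabla_\mu w=0$. I would observe that $\nabla_\mu w(x)\in T_\mu(x)$ for $\mu$-a.e.\ $x$, and that for $\xi\in T_\mu(x)$ one has $(A_\mu(x)\xi,\xi)=(A(x)\xi,\xi)\ge\lambda|\xi|^2$ by \eqref{z3}, because $I-\Pi$ is the orthogonal projection onto $T_\mu(x)$ and therefore acts as the identity there. Consequently the displayed identity forces $\lambda\int_\Omega|\nabla_\mu w|^2\,d\mu\le 0$, so $\nabla_\mu w=0$ $\mu$-a.e., i.e.\ $w\in\ker\nabla_\mu=V$. This is the only place where the degenerate ellipticity of $A$ is genuinely used, and it is the crux of the argument: positivity of $A_\mu$ merely on the tangent space is exactly matched by the fact that tangential gradients take values in that space, so no loss occurs.

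I would then close with the orthogonality hypothesis. Both $u_1$ and $u_2$ are assumed perpendicular to $V$ in the $L^2(\Omega,\mu)$ scalar product, and $V$ is a linear subspace, hence $w=u_2-u_1\perp V$. Combined with $w\in V$ from the previous step, this gives $\|w\|_{L^2(\Omega,\mu)}^2=(w,w)=0$, so $w=0$ and $u_1=u_2$.

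An alternative reaching the same point is to subtract the Euler--Lagrange identities \eqref{r-EL} for $u_1$ and $u_2$, obtaining $\int_\Omega(A_\mu\nabla_\mu w,\nabla_\mu\vfi)\,d\mu=0$ for all $\vfi\in C^1_0(\bR^N)$, and then to approximate $w$ in $H^{1,2}_\mu$ by test functions, so that $\nabla_\mu\vfi_n\to\nabla_\mu w$ in $L^2_\mu$ and the integral passes to the limit to yield the same vanishing quadratic quantity; the tangential-ellipticity and orthogonality steps are then identical. I prefer the convexity route, since it sidesteps the density approximation and does not require re-examining on which space the Euler--Lagrange equation was derived.
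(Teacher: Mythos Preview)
Your argument is correct. Your primary route differs from the paper's: the paper subtracts the Euler--Lagrange identities \eqref{r-EL} for $u_1$ and $u_2$ and then approximates $w=u_2-u_1$ by smooth functions to insert it as a test function, arriving at $\int_\Omega(A_\mu\nabla_\mu w,\nabla_\mu w)\,d\mu=0$; this is precisely the alternative you sketch at the end. Your main argument instead uses the convexity of $E_\mu$ on the segment $u_t=(1-t)u_1+tu_2\subset H$ to force the quadratic leading coefficient to vanish, reaching the same identity without invoking Proposition~\ref{pr-EL} or the density step. From that point on the two proofs coincide: tangential ellipticity \eqref{z3} gives $\nabla_\mu w=0$, and orthogonality to $\ker\nabla_\mu$ forces $w=0$. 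Your route is slightly more self-contained (it does not rely on the Euler--Lagrange equation having been derived), while the paper's route makes the connection to the PDE more explicit; neither has a real advantage in this setting.
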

\begin{proof}
Let us take $u = u_2 - u_1$ and take the difference of (\ref{r-EL}) corresponding to $u_1$ and $u_2$,
$$
\int_\Omega (A_\mu \nabla_\mu u, \nabla_\mu \vfi)\,d\mu =0.
$$
Since $u = u_2 - u_1$ can be approximated in the $H^{1,2}_\mu$-norm by a sequence of $C^1$ functions, we may take $\vfi = u $ in the identity above,
$$
0 = \int_\Omega (A_\mu \nabla_\mu u, \nabla_\mu u)\,d\mu \ge \lambda \| \nabla_\mu u\|_{L^2_\mu},
$$
i.e. $u \in \ker \nabla_mu$. Since we assumed that $u \in (\hbox{ker}\,\nabla_\mu )^\perp$, we deduce that $u = 0$, i.e. $u_2 = u_1$.
\end{proof}

We do not specify any boundary conditions so we expect that the solution satisfies the so-called natural boundary condition, however, in a weak form suitable for $H^{1,2}_\mu$.

\bigskip

\begin{remark}
Let us remark that in the case $S$ is a smooth compact manifold of codimension 1,  $\mu = \cH^{N-1} \llcorner S$, and $A$ is the identity matrix, the equation \eqref{r1} takes the familiar form of 
$$
\Delta u +f=0 \quad \text{on $S$},
$$
where $\Delta$ is the Laplace-Beltrami operator on $S$.
\end{remark}

\section{On the boundary condition} \label{bd condition}

After \cite{frid}, we introduce the notion of a vector valued measure with bounded divergence.

\begin{definition}(see \cite[Definition 1.1]{frid})\label{d-bdiv}
 Let us suppose that $F$ is a vector-valued Radon measure. We set
 $$
 |\di F|(\Omega) :=\sup\{ \langle F,\nabla \psi\rangle:\ \psi\in C^1_c(\Omega), |\psi(x)|\le 1\}.
 $$
 If $|\di F|(\Omega)$ is finite, we say that the {\it divergence of $F$ has a finite total variation}.
\end{definition}
The advantage of measures whose divergence  has a finite total variation is that one can define the trace of their normal component of $\d\Omega$. We recall, see \cite[Theorem 2.2]{frid},
\begin{proposition}\label{pr-sl}
We assume that the boundary of $\Omega$ is  smooth. If both $F$ and its divergence have a bounded total variation, there exists a continuous linear functional $[F,\nu]|_{\d\Omega}$ over $C^2(\d\Omega)$ such that
$$
\langle [F,\nu]|_{\d\Omega}, \vfi\rangle =
\langle \di F, \vfi \rangle + \langle F, \nabla\vfi \rangle
\quad
\text{for any $\vfi \in C^2(\overline{\Omega})$}.
$$
\end{proposition}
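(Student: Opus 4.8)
The plan is to produce the functional directly from the desired Gauss--Green identity and then show that its value depends on the test function only through its boundary trace. For $\varphi \in C^2(\overline{\Omega})$ I set $L(\varphi) := \langle \di F, \varphi\rangle + \langle F, \nabla\varphi\rangle$. Both pairings are finite because $\di F$ and $F$ are finite (vector-valued) Radon measures on $\Omega$ while $\varphi,\nabla\varphi \in C(\overline{\Omega})$, and trivially $|L(\varphi)| \le |\di F|(\Omega)\,\|\varphi\|_\infty + |F|(\Omega)\,\|\nabla\varphi\|_\infty$. The entire content of the statement is that $L(\varphi)$ depends on $\varphi$ only through $\varphi|_{\partial\Omega}$; granting this, $L$ factors through the restriction map $C^2(\overline{\Omega}) \to C^2(\partial\Omega)$, which is onto since $\partial\Omega$ is smooth, and the induced functional is precisely $[F,\nu]|_{\partial\Omega}$.

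By linearity the key reduction is to prove $L(\varphi) = 0$ whenever $\varphi|_{\partial\Omega} = 0$. For this I would use the distance $d(x)$ from $x$ to $\partial\Omega$, which is smooth in a collar of $\partial\Omega$ by smoothness of the boundary, together with cutoffs $\eta_h \in C^\infty_c(\Omega)$ satisfying $\eta_h = 0$ on $\{d \le h\}$, $\eta_h = 1$ on $\{d \ge 2h\}$, and $|\nabla\eta_h| \le C/h$. For every $h>0$ the product $\varphi\eta_h$ is supported in the compact set $\{d \ge h\} \subset \Omega$, so by the very definition of the distributional divergence $\langle \di F, \varphi\eta_h\rangle = -\langle F, \nabla(\varphi\eta_h)\rangle$, that is $L(\varphi\eta_h) = 0$. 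It therefore suffices to pass to the limit $h \to 0$.

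Expanding $\nabla(\varphi\eta_h) = \eta_h\nabla\varphi + \varphi\nabla\eta_h$ splits the limit into three pieces. Since $\eta_h \to 1$ pointwise on $\Omega$ with $|\eta_h| \le 1$ and $\di F$, $F$ are finite measures, dominated convergence yields $\langle \di F, \varphi\eta_h\rangle \to \langle \di F, \varphi\rangle$ and $\langle F, \eta_h\nabla\varphi\rangle \to \langle F, \nabla\varphi\rangle$. The remaining term $\langle F, \varphi\nabla\eta_h\rangle$ is where the boundary hypothesis is used: on the collar $\{h \le d \le 2h\}$ carrying $\nabla\eta_h$ one has $|\varphi(x)| \le \|\nabla\varphi\|_\infty\, d(x) \le 2h\|\nabla\varphi\|_\infty$ (because $\varphi$ vanishes on $\partial\Omega$ and the nearest-point segment lies in $\overline{\Omega}$ for $h$ small), so $|\varphi\nabla\eta_h| \le 2C\|\nabla\varphi\|_\infty$ uniformly, giving $|\langle F, \varphi\nabla\eta_h\rangle| \le 2C\|\nabla\varphi\|_\infty\,|F|(\{d \le 2h\})$. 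As the collars decrease to $\emptyset$ inside $\Omega$ and $|F|(\Omega) < \infty$, continuity of the measure forces this to vanish. Adding the three limits gives $L(\varphi) = \lim_{h\to0} L(\varphi\eta_h) = 0$.

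For continuity on $C^2(\partial\Omega)$ I would fix a bounded extension operator $E\colon C^2(\partial\Omega) \to C^2(\overline{\Omega})$, available by smoothness of $\partial\Omega$, and estimate $|\langle [F,\nu], g\rangle| = |L(Eg)| \le \bigl(|\di F|(\Omega) + |F|(\Omega)\bigr)\,\|Eg\|_{C^1(\overline{\Omega})} \le C\,\|g\|_{C^2(\partial\Omega)}$. I expect the main obstacle to be the single estimate for $\langle F, \varphi\nabla\eta_h\rangle$: it rests on balancing the blow-up $|\nabla\eta_h| \sim 1/h$ against the linear decay $|\varphi| \lesssim d$ forced by the vanishing of $\varphi$ on $\partial\Omega$, and on the non-concentration of the finite measure $|F|$ on the shrinking collars $\{d \le 2h\}$. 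Both ingredients depend on the regular collar and distance-function structure provided by the smoothness of $\partial\Omega$.
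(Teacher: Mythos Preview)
Your argument is correct. The paper itself does not prove this proposition: it is quoted verbatim from Chen--Frid \cite[Theorem~2.2]{frid} and simply recalled for later use in Theorem~\ref{tw b}. What you have written is therefore not a reproduction of the paper's reasoning but an independent, self-contained proof of the cited result in the simplified setting of a smooth boundary.

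Your route---define $L(\varphi)=\langle\di F,\varphi\rangle+\langle F,\nabla\varphi\rangle$, then show via the cutoffs $\eta_h$ that $L$ annihilates $C^2$ functions vanishing on $\partial\Omega$---is essentially the Chen--Frid construction specialised to smooth $\partial\Omega$. The only step that deserves care is the cross term $\langle F,\varphi\nabla\eta_h\rangle$, and you handle it correctly: the linear decay $|\varphi(x)|\le \|\nabla\varphi\|_\infty\,d(x)$ coming from $\varphi|_{\partial\Omega}=0$ exactly compensates the blow-up $|\nabla\eta_h|\le C/h$, so the term is controlled by $|F|(\{d\le 2h\})\to 0$. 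In the general Lipschitz-deformable setting of \cite{frid} one replaces the normal cutoffs by a deformation of the boundary, but with a smooth boundary your distance-function approach is both legitimate and cleaner. The factorisation through the surjective restriction $C^2(\overline\Omega)\to C^2(\partial\Omega)$ and the continuity estimate via a bounded extension operator are standard and correct.
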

Actually, the original statement of the Proposition is more general, in particular less smoothness of the boundary of $\Omega$ is sufficient. Namely, it suffices if $\d\Omega$ is Lipschitz deformable, see \cite[Definition 2.1]{frid}. In addition,
$[F,\nu]|_{\d\Omega}$ could be defined
as a functional on  the space $\Lip(\gamma,\Omega)$, $\gamma>1$, which is larger than $C^2(\d\Omega)$, see \cite{frid}. However, such generality is not necessary here, so we choose the simplicity of exposition.

We want to show that if $u$ is a minimizer of $E$, then $F:= A_\mu  \nabla_\mu u$ has divergence with a finite total variation. 

\begin{theorem} \label{tw b}
Let us suppose that the structural assumption \textbf{[S]} as well as the condition \eqref{z2} hold. If  $u$ is the minimizer of $E_\mu$, then $F:= A_\mu  \nabla_\mu u\in \cM(\Omega, \bR^N)$ has divergence with finite total variation. As a result, $[A_\mu  \nabla_\mu u, \nu]|_{\d\Omega} =0$ in the sense of Proposition \ref{pr-sl}.
\end{theorem}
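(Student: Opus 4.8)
The plan is to identify $F=A_\mu\nabla_\mu u$ with the finite vector-valued measure $A_\mu\nabla_\mu u\,\mu$ and to show that its distributional divergence equals $-Q$; the finiteness of the total variation of $\di F$ will then be inherited directly from assumption \eqref{z2}. First I would check that $F\in\cM(\Om,\bR^N)$: since $A_\mu\in L^\infty(\Om,\mu)$ and $\nabla_\mu u\in L^2(\Om,\mu)$ while $\mu$ is finite, Cauchy--Schwarz gives $|F|(\Om)=\int_\Om|A_\mu\nabla_\mu u|\,d\mu\le\|A_\mu\|_\infty\,\|\nabla_\mu u\|_{L^2(\Om,\mu)}\,\mu(\Om)^{1/2}<\infty$.

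The computation of $\di F$ rests on the algebraic fact that $A_\mu\nabla_\mu u(x)\in T_\mu(x)$ for $\mu$-a.e.\ $x$, which is guaranteed by the structure $A_\mu=(I-\Pi)A(I-\Pi)$ from Proposition \ref{pr3.1}. Since $T_\mu(x)$ is orthogonal to the normal space, pairing $A_\mu\nabla_\mu u$ with any gradient discards its normal part, so $A_\mu\nabla_\mu u\cdot\nabla\psi=A_\mu\nabla_\mu u\cdot\nabla_\mu\psi$ $\mu$-a.e. Before invoking the Euler--Lagrange identity I would note that the minimizer furnished by Theorem \ref{tw1} on $H$ is also a minimizer on all of $H^{1,2}_\mu$, because adding any $h\in\ker\nabla_\mu$ leaves $E_\mu$ unchanged ($\nabla_\mu h=0$ and $\langle Q,h\rangle=0$); hence Proposition \ref{pr-EL} applies and \eqref{r-EL} is available. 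For $\psi\in C^1_c(\Om)$ this yields
\begin{equation*}
\langle\di F,\psi\rangle=-\int_\Om A_\mu\nabla_\mu u\cdot\nabla\psi\,d\mu=-\int_\Om A_\mu\nabla_\mu u\cdot\nabla_\mu\psi\,d\mu=-\int_\Om f\psi\,d\mu=-\langle Q,\psi\rangle,
\end{equation*}
so that $\di F=-Q$ as distributions on $\Om$. As $Q$ has finite total variation by \eqref{z2}, the same holds for $\di F$, which is the first assertion.

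With both $F$ and $\di F$ of finite total variation, Proposition \ref{pr-sl} supplies the normal trace $[A_\mu\nabla_\mu u,\nu]|_{\d\Om}$ as a continuous functional on $C^2(\d\Om)$ satisfying, for every $\vfi\in C^2(\overline\Om)$,
\begin{equation*}
\langle[A_\mu\nabla_\mu u,\nu]|_{\d\Om},\vfi\rangle=\langle\di F,\vfi\rangle+\langle F,\nabla\vfi\rangle.
\end{equation*}
Inserting $\di F=-Q$ into the first term and using tangency together with \eqref{r-EL} in the second, I obtain $\langle\di F,\vfi\rangle=-\langle Q,\vfi\rangle$ and $\langle F,\nabla\vfi\rangle=\int_\Om A_\mu\nabla_\mu u\cdot\nabla_\mu\vfi\,d\mu=\langle Q,\vfi\rangle$. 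The two terms cancel, so $\langle[A_\mu\nabla_\mu u,\nu]|_{\d\Om},\vfi\rangle=0$; since every element of $C^2(\d\Om)$ is the restriction of some $\vfi\in C^2(\overline\Om)$, this gives $[A_\mu\nabla_\mu u,\nu]|_{\d\Om}=0$.

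The main obstacle is precisely the passage from interior test functions $\psi\in C^1_c(\Om)$ to boundary-nonvanishing $\vfi\in C^2(\overline\Om)$, as it is the boundary term that encodes the claim. To justify it I would use that the variational identity \eqref{r-EL} is available for test functions that need not vanish on $\d\Om$ — any such $\vfi$ extends to a compactly supported function on $\bR^N$, and only its values on $S\subset\overline\Om$ enter the $\mu$-integrals — and that under \textbf{[S]} the lower-dimensional junction and boundary sets carry no $\mu$-mass, so $\mu(S\cap\d\Om)=0$ and all the pairings $\langle Q,\vfi\rangle$, $\langle F,\nabla\vfi\rangle$ are unambiguous. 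These facts make the cancellation of the previous paragraph rigorous.
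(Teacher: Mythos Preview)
Your proof is correct and follows essentially the same route as the paper: both use the tangency $A_\mu\nabla_\mu u\in T_\mu$ from Proposition~\ref{pr3.1} to rewrite $\langle F,\nabla\psi\rangle$ as $\langle Q,\psi\rangle$ via the Euler--Lagrange identity \eqref{r-EL}, conclude that $\di F$ has finite total variation, and then invoke Proposition~\ref{pr-sl}. The only difference is cosmetic: in the final step you cancel $\langle\di F,\vfi\rangle+\langle F,\nabla\vfi\rangle=-\langle Q,\vfi\rangle+\langle Q,\vfi\rangle=0$ directly for $\vfi\in C^2(\overline\Omega)$, whereas the paper arrives at the same vanishing by inserting cut-offs $\eta_k$ and letting $k\to\infty$ via dominated convergence; your explicit remark that the minimizer on $H$ is also a minimizer on all of $H^{1,2}_\mu$ (so that Proposition~\ref{pr-EL} applies) is a point the paper leaves implicit.
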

\begin{proof}
We have to reconcile Definition \ref{d-bdiv} with the Euler-Lagrange equation (\ref{pr-EL}): in  (\ref{pr-EL}) the inner product of $A_\mu \nabla_\mu u$ with $\nabla_\mu u$ is taken, while in the definition  $A_\mu \nabla_\mu u$ is multiplied with the full gradient of the test function. 
The relaxation result, Proposition \ref{pr3.1}, makes this difference apparent. In fact, no matter what the matrix $A(x)$ is, Proposition \ref{pr3.1} implies that
\begin{equation}\label{z5}
 A_\mu: T_\mu \to T_\mu.
\end{equation}
Observation
(\ref{z5}) makes (\ref{r-EL}) coincide with
$$
\langle F, \nabla \psi\rangle = \langle Q,  \psi\rangle \quad \forall  \psi\in C^1_c(\Omega).
$$
Indeed, if $\Pi_\mu(x)$ is the projection onto $T_\mu(x)$ at each $x\in\Omega$, then for $\psi\in C^1_c(\Omega)$ we have $\nabla\psi (x)= \Pi_\mu(x)\nabla \psi(x) + (Id-\Pi_\mu(x) )\nabla \psi(x)$. As a result we notice,
\begin{eqnarray*}
\langle F, \nabla \vfi\rangle &=& \int_\Omega ( A_\mu \nabla_\mu  u, \nabla_\mu \psi  + (Id- \Pi_\mu) \nabla \psi)\,d\mu \\
&=& \int_\Omega (A_\mu \nabla_\mu u, \nabla_\mu \psi) \, d\mu + 0 \\
&=& \langle Q, \psi\rangle,
\end{eqnarray*}
where we used (\ref{z5}).
Now, we can see that $|\di F|(\Omega)$ is finite, because
$$
\sup\{ \langle Q,\psi \rangle:\ \psi \in C^1_c(\Omega), |\psi(x)|\le 1\} = \int_\Omega |f|\,d\mu <\infty.
$$
Thus, we immediately deduce that $[A_\mu \nabla_\mu u,\nu]|_{\d\Omega}$ exists in the sense explained in
Theorem \cite[Theorem 2.2]{frid}. Now, we have to show that it vanishes., i.e.
$$
\langle \di A_\mu \nabla_\mu u, \vfi \rangle + \langle A_\mu \nabla_\mu u, \nabla\vfi \rangle = 0
\quad
\text{for any $\vfi \in C^2(\overline{\Omega})$}.
$$
Let us suppose that $\eta_k\in C^\infty_c(\Omega)$ are the cut-off functions such that
$\eta_n(x)=0$ for $x\in\Omega$ at a distance smaller than $1/(k+1)$ from $\d\Omega$ and equal to 1 for $x$ further than $1/k$ from $\d\Omega$. Then, since $\eta_k \vfi \in C^1_c (\Omega)$ and by using of the  Euler-Lagrange equation (\ref{r-EL}) with $(1-\eta_k)\vfi$ as a test function, we obtain
\begin{align*}
\langle & \di  A_\mu  \nabla_\mu u, \vfi \rangle + \langle A_\mu \nabla_\mu u, \nabla\vfi \rangle \\
    &= \langle \di A_\mu \nabla_\mu u, \eta_k \vfi \rangle +\langle \di A_\mu \nabla_\mu u, (1-\eta_k) \vfi \rangle
    +  \langle A_\mu \nabla_\mu u, \nabla(\eta_k \vfi) \rangle + \langle A_\mu \nabla_\mu u, \nabla( (1-\eta_k) \vfi) \rangle \\
    &= \langle \di A_\mu \nabla_\mu u, (1-\eta_k) \vfi \rangle + \langle A_\mu \nabla_\mu u, \nabla( (1-\eta_k) \vfi) \rangle \\
    &= \langle \di A_\mu \nabla_\mu u, (1-\eta_k) \vfi \rangle + \int_\Omega f (1-\eta_k) \vfi\, d\mu.
\end{align*}

If $\omega$ is a measure with a finite variation, then
$$
\lim_{k\to\infty}\int_\Omega (1-\eta_k)\vfi \,d\omega =0,
$$
because of the Lebesgue dominated convergence Theorem. If we apply this remark to $\omega = Q$ and
$\omega =\di A_\mu \nabla_\mu u$, then we deduce that
$$
\langle [ A_\mu \nabla_\mu u, \nu]_{\d\Omega},\vfi\rangle =0
$$
for all $\vfi \in C^2(\bar\Omega)$. Our claim follows.
\end{proof}

\begin{remark}\label{rem41}
Theorem \ref{tw b} tells us about the meaning of $[A_\mu \nabla_mu, \nu]$ only at the boundary of $\Omega$. So, if a part of $\d S$ does not touch $\d\Omega$, then we have no information about the boundary behavior of $u$. This is why we impose the third condition of \textbf{[S]}. On the other hand,  the fourth condition in \textbf{[S]} rules out spurious conditions.
\end{remark}

\section{Examples of Neumann boundary problems} \label{examples}

We present below several examples of Neumann boundary problems on various sets $S$. The first two examples deal with situations when the Poincar\'e inequality \eqref{point-g} holds. We further present an example (see Proposition \ref{discont}) showing that discontinuous solutions actually occur, even for regular data, when the set $S$ supports only the weaker inequality \eqref{poincare2}.

\bigskip

Let us suppose that $\Omega$ is a ball centered at 0 with radius 1, $T$  is an inscribed isosceles triangle and $S_1$, $S_2$, $S_3$ is the set of radii connecting the center of $\Omega$ with the vertices of $T$ (see Fig. \ref{litera Y}). We set $S = S_1\cup S_2\cup S_3$. Moreover, $\mu = \cH^1 \llcorner S$. We set $Q = \sum_{i=1}^3 a_i\chi_{S_i}$. 

\begin{figure}[h]      
\centering
\begin{tikzpicture}[scale=0.7]

\draw[ultra thick] (-2.55,1.2) -- (0,0);
\draw[ultra thick] (2.55,1.2) -- (0,0);
\draw[ultra thick] (0,0) -- (0,-2.85);

\filldraw (-2.55,1.2) circle (2pt);
\filldraw (2.55,1.2) circle (2pt);
\filldraw (0,-2.85) circle (2pt);

\filldraw[fill=lightgray!70!white,opacity=0.7] (0,0) circle [radius=2.85];

\end{tikzpicture}
\caption{} \label{litera Y}
\end{figure}

We can state the following fact.

\begin{proposition}
Let us assume that $S$, $\mu$ and $Q$ are defined above and $a_1 +a_2+ a_3 =0$.
If $A = Id$, then the relaxed form of equation (\ref{r1}) takes the form,
\begin{equation}\label{rT}
 \frac{d^2 u}{ds^2} = -a_i\qquad\hbox{in the interior of } S_i, \qquad i=1,2,3
\end{equation}
and
\begin{equation}\label{rN}
\frac{d u}{ds} = 0\qquad\hbox{on } S_i\cap \d\Omega, \qquad i=1,2,3.
\end{equation}
Here, $s$ is the arc-length parameter counted from the center of $\Omega$.

Finally, the solution of (\ref{rT}-\ref{rN}) with $\int_S u\,d \cH^1 =0$ has the following form,
$$
u = -\sum_{i=1}^3 a_i(\frac{s^2}{2}-s)\chi_{S_i}.
$$
\end{proposition}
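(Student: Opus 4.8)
The plan is to exploit the one-dimensional nature of each radius $S_i$ to reduce the weak formulation \eqref{rel-r1} to a system of ordinary differential equations with junction conditions, and then to integrate explicitly. First I would record the local structure. Each $S_i$ is a straight segment of length one, so by Proposition \ref{pr-sob} a function $u\in H^{1,2}_\mu$ restricts to $u_i:=u|_{S_i}\in H^{1,2}(0,1)$, parametrised by arc length $s\in[0,1]$ with $s=0$ at the centre and $s=1$ at the vertex on $\d\Omega$; moreover $T_\mu$ coincides $\mu$-a.e.\ with the line spanned by the radial unit vector, so $\nabla_\mu u$ is carried by $u_i'$. Since $A=\mathrm{Id}$, formula \eqref{rA} gives $A_\mu=\Pi_\mu$, the orthogonal projection onto $T_\mu$, whence $A_\mu\nabla_\mu u=\nabla_\mu u$. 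With density $f=\sum_i a_i\chi_{S_i}$, the defining identity \eqref{rel-r1} becomes
$$
\sum_{i=1}^3\int_0^1 u_i'\,\varphi_i'\,ds=\sum_{i=1}^3 a_i\int_0^1\varphi_i\,ds,\qquad \varphi\in C^\infty_c(\bR^N),
$$
where $\varphi_i=\varphi|_{S_i}$.

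Next I would localise. Choosing $\varphi$ supported in the relative interior of a single $S_i$ collapses the identity to $\int_0^1 u_i'\varphi_i'\,ds=a_i\int_0^1\varphi_i\,ds$, i.e.\ $u_i''=-a_i$ in $\cD'(0,1)$; this is \eqref{rT}, and it upgrades $u_i$ to $H^2(0,1)$, so the one-sided derivatives $u_i'(0),u_i'(1)$ exist. Integration by parts is now legitimate, and for general $\varphi$ the identity reduces, using $u_i''=-a_i$ and $\varphi_i(0)=\varphi(0)$, to
$$
\sum_{i=1}^3 u_i'(1)\,\varphi_i(1)-\Big(\sum_{i=1}^3 u_i'(0)\Big)\varphi(0)=0.
$$
Since the three vertices and the centre are distinct points, the values $\varphi_i(1)$ and $\varphi(0)$ may be prescribed independently by a suitable $\varphi\in C^\infty_c(\bR^N)$. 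Taking $\varphi(0)=0$ forces $u_i'(1)=0$, which is \eqref{rN}; as the radial direction is the outward normal $\nu$ to the sphere at the vertex, this is exactly the trace condition $[A_\mu\nabla_\mu u,\nu]=0$ of Theorem \ref{tw b}. Letting $\varphi(0)\neq0$ then yields the Kirchhoff relation $\sum_i u_i'(0)=0$.

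The remaining junction datum is continuity of $u$ at the centre, and this is the step requiring care, since the abstract characterisation in Proposition \ref{pr-sob} does not by itself couple the three restrictions. Here I would use that, for each $i$, the restriction map $H^{1,2}_\mu\to H^{1,2}(0,1)\hookrightarrow C[0,1]$ is continuous: given $u\in H^{1,2}_\mu$, pick $\varphi_n\in C^\infty_c(\bR^N)$ with $\varphi_n\to u$ in $H^{1,2}_\mu$; then $\varphi_n|_{S_i}\to u_i$ uniformly on $[0,1]$, while $\varphi_n|_{S_i}(0)=\varphi_n(0)$ is a single number independent of $i$. Passing to the limit gives $u_1(0)=u_2(0)=u_3(0)$. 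Equivalently, a jump at the centre would cost infinite Dirichlet energy by Cauchy--Schwarz over a shrinking transition interval; this is why \eqref{point-g} holds here, in contrast with the planar examples of Section \ref{Poincare}, and why $\ker\nabla_\mu$ reduces to the constants. Thus any $H^{1,2}_\mu$-solution satisfies \eqref{rT}, \eqref{rN}, the continuity $u_i(0)=u_j(0)$, and $\sum_i u_i'(0)=0$.

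Finally I would integrate the system. From $u_i''=-a_i$ we get $u_i(s)=-\tfrac{a_i}{2}s^2+\alpha_i s+\beta_i$; the Neumann condition $u_i'(1)=-a_i+\alpha_i=0$ fixes $\alpha_i=a_i$, so $u_i(s)=-a_i\big(\tfrac{s^2}{2}-s\big)+\beta_i$. The Kirchhoff relation then reads $\sum_i\alpha_i=\sum_i a_i=0$, which holds by hypothesis and so imposes no further restriction, while continuity forces $\beta_1=\beta_2=\beta_3=:\beta$. Imposing the normalisation $\int_S u\,d\cH^1=\sum_i\int_0^1 u_i\,ds=0$ and using $\int_0^1(\tfrac{s^2}{2}-s)\,ds=-\tfrac13$ gives $\tfrac13\sum_i a_i+3\beta=0$, hence $\beta=0$. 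Therefore $u=-\sum_{i=1}^3 a_i\big(\tfrac{s^2}{2}-s\big)\chi_{S_i}$, as claimed. The main obstacle is the junction-continuity argument of the third paragraph, which is precisely what singles out this case from the degenerate ones; the rest is a routine one-dimensional computation.
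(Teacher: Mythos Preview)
Your proof is correct and follows essentially the same route as the paper: identify $T_\mu$ on each radius, reduce to one-dimensional ODEs with the Neumann condition at the endpoints, impose continuity at the centre, and fix the remaining constant by the zero-mean requirement. The paper's proof is terser; it simply invokes Proposition \ref{pr-roz} and Proposition \ref{pr3.1} for the form of the equation and boundary data, cites Theorem \ref{tw0} for existence, and then asserts that the ODE solution must be continuous at $s=0$ without further comment.

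Your version adds two things the paper omits. First, you derive the Kirchhoff transmission condition $\sum_i u_i'(0)=0$ directly from the weak formulation and observe it is automatically satisfied by the hypothesis $\sum_i a_i=0$; the paper does not mention this condition at all. Second, you supply an argument for continuity at the junction via the definition of $H^{1,2}_\mu$ as the closure of smooth functions and the embedding $H^{1,2}(0,1)\hookrightarrow C[0,1]$; the paper simply states that continuity must be ``kept in mind.'' Your remark that the description in Proposition \ref{pr-sob} does not by itself enforce junction continuity is well taken and is precisely why a direct argument is needed here.
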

\begin{proof}
 By Proposition \ref{pr-roz} $T_\mu(x) = T_{S_i}(x)$, except for $x$ being the  center of the ball. Hence, Proposition \ref{pr3.1} yields $\di A_\mu \nabla_\mu u = \frac{d^2 u}{ds^2}$ in the interior of $S_i$, $ i=1,2,3$. Also the form, (\ref{rN}), of boundary data follows.

 The assumption  $a_1 +a_2+ a_3 =0$ means that $Q$ is perpendicular to the kernel of $\nabla_\mu$ so that we may apply Theorem \ref{tw0}.

 The form of any solution to  (\ref{rT}-\ref{rN}) follows from solving the ODE's (\ref{rT}). While doing so, we keep in mind the requirement that the solution must be continuous at $s=0$, i.e. at the center of the ball $\Omega$. The solution is determined up to an additive constant, which can be computed due to the zero average condition, $\int_S u\, d\cH^1=0$.
\end{proof}

We can also study (\ref{r1}) on sums of manifolds of dimension $k\geq 1$.
We describe the set $S$ depicted on Fig. \ref{kartki2}.

\begin{figure}[h]   
\centering
\begin{tikzpicture}[scale=0.7]
\begin{scope}
\clip (0,0) circle [x radius=3, y radius=1];
\filldraw[fill=lightgray!50!white] (-1,-1)--(-5,-1)--(-3,1)--(1,1)--cycle;
\end{scope}
\begin{scope}
\clip (0,0) circle [x radius=1, y radius=3];
\filldraw[fill=lightgray!50!white] (-1,-1)--(-1,-5)--(1,-3)--(1,1)--cycle;
\end{scope}
\begin{scope}
\clip (0,0) circle [x radius=3, y radius=1];
\filldraw[fill=lightgray!80!white,opacity=0.7] (-1,-1)--(4,-1)--(6,1)--(1,1)--cycle;
\end{scope}
\begin{scope}
\clip (0,0) circle [x radius=1, y radius=3];
\filldraw[fill=lightgray!80!white,opacity=0.7] (-1,-1)--(-1,4)--(1,6)--(1,1)--cycle;
\end{scope}
\draw[very thin] (0,0) circle [x radius=3, y radius=1];
\draw[very thin] (0,0) circle [x radius=1, y radius=3];
\shadedraw[ball color=white,opacity=0.3] (0,0) circle [radius=3];
\draw[thick] (-3,0) arc [start angle=180, end angle=360, x radius=3, y radius=1];
\draw[thick] (0,-3) arc [start angle=270, end angle=90, y radius=3, x radius=1];
\end{tikzpicture}
\caption{} \label{kartki2}
\end{figure}

\begin{proposition}
 We assume that $A = Id$, $\Omega$ is the unit ball $B(0,1)\subset\bR^3$ and $S = D_1\cup D_2$ are two great disks of $B(0,1)$,
$$
D_1 = \{ (x_1, x_2, x_3)\in B(0,1): x_3 =0\},\qquad
D_2 = \{ (x_1, x_2, x_3)\in B(0,1): x_1 =0\}.
$$
Moreover, $Q = Q_1\chi_{D_1} + Q_2\chi_{D_2}$, where $\int_{D_i}dQ_i =0$, $i=1,2$.
Then, the relaxed equations (\ref{rel-r1}) take the form
$$
\begin{array}{ll}
 \frac{\partial^2 u}{\partial x_1^2} + \frac{\partial^2 u}{\partial x_2^2} + Q_1 = 0& \hbox{in } D_1,\\
 \frac{\partial u}{\partial\nu_1} = 0& \hbox{on } \partial D_1
\end{array}
$$
and
$$
\begin{array}{ll}
 \frac{\partial^2 u}{\partial x_2^2} + \frac{\partial^2 u}{\partial x_3^2} + Q_2 = 0& \hbox{in } D_2,\\
 \frac{\partial u}{\partial\nu_2} = 0& \hbox{on } \partial D_2,
\end{array}
$$
where $\nu_i$ are normal to $D_i$ in the plane containing $D_i$, $i=1,2$. The  condition of orthogonality to the kernel of $\nabla_mu$ is $\int_{D_i} u\,d\cH^2 =0$, $i=1,2$.
\end{proposition}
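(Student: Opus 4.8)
The plan is to follow the general machinery of Sections \ref{sobolev spaces}--\ref{bd condition} and specialize it to the two great disks, reading off the classical form of the equations piece by piece. First I would pin down the tangent space. By Proposition \ref{pr-sob} the junction $D_1\cap D_2=\{x_1=x_3=0\}$ is a one-dimensional segment, hence $\mu$-negligible, and for $\mu$-a.e.\ $x$ one has $T_\mu(x)=T_{D_1}(x)=\mathrm{span}\{e_1,e_2\}$ on $D_1$ and $T_\mu(x)=T_{D_2}(x)=\mathrm{span}\{e_2,e_3\}$ on $D_2$. Since $A=Id$, formula \eqref{rA} gives $A_\mu(x)=\Pi_\mu(x,\cdot)$, the orthogonal projection onto $T_\mu(x)$; because $\nabla_\mu u(x)\in T_\mu(x)$ already, $A_\mu\nabla_\mu u=\nabla_\mu u$, and the tangential gradient is the ordinary planar gradient $(\partial_{x_1}u,\partial_{x_2}u)$ on $D_1$ and $(\partial_{x_2}u,\partial_{x_3}u)$ on $D_2$.

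Next I would insert this into the weak formulation \eqref{rel-r1}. Splitting the integral over the two disks, the identity becomes
\[
\int_{D_1}(\nabla u,\nabla\vfi)\,d\cH^2+\int_{D_2}(\nabla u,\nabla\vfi)\,d\cH^2
=\int_{D_1}\vfi\,dQ_1+\int_{D_2}\vfi\,dQ_2
\]
for every $\vfi\in C^\infty_c(\bR^3)$, where on $D_i$ the gradient is understood in the plane of $D_i$. To extract the interior equation on $D_1$ I would localize: choosing $\vfi$ supported in a small ball centered at a point of the relative interior of $D_1$, away from $D_2$ and from $\d\Omega$, kills the $D_2$ contribution, and Green's formula on the flat disk gives $\partial_{x_1}^2u+\partial_{x_2}^2u+Q_1=0$; the same argument on $D_2$ yields $\partial_{x_2}^2u+\partial_{x_3}^2u+Q_2=0$. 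For the boundary condition I would invoke Theorem \ref{tw b}, which gives $[A_\mu\nabla_\mu u,\nu]_{\d\Omega}=0$. On the smooth arcs $\d D_i=D_i\cap\d\Omega$ (great circles), allowing $\vfi$ to be nonzero up to the boundary and integrating by parts against the interior equation, the vanishing of the normal trace translates into $\partial u/\partial\nu_i=0$, with $\nu_i$ the outer normal to $\d D_i$ inside the plane containing $D_i$.

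It remains to identify $\ker\nabla_\mu$ and hence the correct normalization and solvability condition. Here I would use the decoupled description of $H^{1,2}_\mu$ from Proposition \ref{pr-sob}, which shows that $\chi_{D_1}$ and $\chi_{D_2}$ both lie in $H^{1,2}_\mu$ with vanishing tangential gradient, so that $V=\ker\nabla_\mu=\mathrm{span}\{\chi_{D_1},\chi_{D_2}\}$ and $d=\dim V=2$, with $J_1=\{1\}$, $J_2=\{2\}$. Consequently the projection \eqref{P formula} is $P_i u=\chi_{D_i}\,\mvint_{D_i}u\,d\mu$, the solvability hypothesis $\langle Q,h\rangle=0$ for all $h\in\ker\nabla_\mu$ reads exactly $\int_{D_i}dQ_i=0$ for $i=1,2$, and the orthogonality normalization $Pu=0$ becomes $\int_{D_i}u\,d\cH^2=0$, $i=1,2$. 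Existence and uniqueness of such a $u$ then follow from Theorem \ref{tw0}, and since the two Neumann problems decouple, solving them reproduces the stated form of the equations.

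I expect the delicate point to be precisely this last identification of the kernel: one must be sure that the one-dimensional junction $D_1\cap D_2$ really does permit $\chi_{D_1}$ and $\chi_{D_2}$ to be separate elements of $\ker\nabla_\mu$ --- equivalently, that in the completion defining $H^{1,2}_\mu$ the junction imposes no matching (trace) constraint between the two disks --- so that $\dim V=2$ rather than $1$. This is exactly the borderline between the global inequality \eqref{point-g} and its weaker form \eqref{poincare2}, and it is the $\mu$-negligibility of the junction underlying Proposition \ref{pr-sob} that I would lean on to settle it; the remaining localization and integration-by-parts steps are routine once the tangential gradient has been made explicit.
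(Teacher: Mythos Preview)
Your computation of $A_\mu$ and the derivation of the interior equations and Neumann conditions follow the paper's line exactly: the paper just writes $A_\mu=(e_1\otimes e_1+e_2\otimes e_2)\chi_{D_1}+(e_2\otimes e_2+e_3\otimes e_3)\chi_{D_2}$ via Proposition~\ref{pr3.1} and cites Corollary~\ref{pa-pa} for the boundary part, where you unwind Theorem~\ref{tw b} by hand. This portion is correct and merely more explicit than the paper's two-line proof.

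The genuine gap is your identification of $\ker\nabla_\mu$. You argue that the $\mu$-negligibility of the junction $D_1\cap D_2$, together with the decoupled description in Proposition~\ref{pr-sob}, forces $\chi_{D_1},\chi_{D_2}\in H^{1,2}_\mu$ and hence $d=2$. But $\mu$-negligibility is not the criterion that decides whether the completion $H^{1,2}_\mu$ imposes a matching constraint across a junction; the relevant quantity is the $2$-capacity of the junction inside each piece. Here the junction is a one-dimensional segment sitting in two two-dimensional disks, so it has \emph{positive} $2$-capacity in each: the trace operator $H^{1,2}(D_i)\to L^2(D_1\cap D_2)$ is continuous, and any smooth sequence $u_n$ with $u_n|_{D_1}\to 1$ and $u_n|_{D_2}\to 0$ in $H^{1,2}$ would force the single-valued restriction $u_n|_{D_1\cap D_2}$ to converge simultaneously to $1$ and to $0$ in $L^2$ of the segment. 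Thus $\chi_{D_i}\notin H^{1,2}_\mu$ and $\dim\ker\nabla_\mu=1$. The paper confirms this at the opening of Section~\ref{examples}, where the present example is listed among those supporting the global Poincar\'e inequality~\eqref{point-g}; contrast with Section~\ref{Poincare} and Proposition~\ref{discont}, where decoupling does occur precisely because the junction is a \emph{point} in a two-dimensional piece and hence has zero $2$-capacity. (Taken literally, your reading of Proposition~\ref{pr-sob} would also decouple the three legs of the $Y$-shaped first example, contradicting the continuity at the origin enforced there; that proposition should be read as a statement about componentwise regularity, not as the absence of junction constraints. The paper's own proof does not discuss the kernel at all.)
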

\begin{proof}
We easily see that, due to Proposition \ref{pr3.1}, we will have
$$
A_\mu = (e_1\otimes e_1 +e_2\otimes e_2)\chi_{D_1} + (e_3\otimes e_3 +e_2\otimes e_2)\chi_{D_2}.
$$
Hence, the form of relaxed equations follows. The boundary conditions are addressed in Corollary \ref{pa-pa}.
\end{proof}

We notice that
$A_\mu$ depends upon $x\in \Omega$  despite $A$ being constant. Moreover, the kernel of $A_\mu$ is two-dimensional.

\bigskip

Due to the generalized Poincar\'e inequality \eqref{poincare2},
we can study (\ref{r1}) on sums of manifolds of different dimensions.
We analyze an  example of an equation on the domain depicted on
Figure \ref{tloczek}. We define the design region $\Omega$ to be $B(0,\sqrt2)\subset\bR^3$. We set,
$$
S_i = \Omega\cap\{(x_1,x_2,x_3)\in \bR^3:\ x_1 = (-1)^i\}, \quad i =1,2,\qquad
S_3 = \{(x_1,0,0)\in \bR^3:\ |x_1|\le 1\}.
$$
In the case of this domain, we do not expect the global Poincar\'e inequality \eqref{point-g} to hold.

\begin{figure}[h] 
\centering
\begin{tikzpicture}[scale=0.6]

\begin{scope}
\clip (-4,0) circle [x radius=0.4, y radius=3];
\filldraw[fill=lightgray!50!white] (-5,-3)--(-5,3)--(-3,3)--(-3,-3)--cycle;
\end{scope}
\begin{scope}
\clip (4,0) circle [x radius=0.4, y radius=3];
\filldraw[fill=lightgray!50!white,opacity=0.7] (3,-3)--(5,-3)--(5,3)--(3,3)--cycle;
\end{scope}

\draw[very thin] (-4,0) circle [x radius=0.4, y radius=3];
\draw[very thin] (4,0) circle [x radius=0.4, y radius=3];
\draw[thick] (-4,0) -- (3.6,0);

\shadedraw[ball color=white,opacity=0.2] (0,0) circle [x radius=6, y radius=4.1];
\draw[very thick] (-4,-3) arc [start angle=270, end angle=90, x radius=0.4, y radius=3];
\draw[very thick] (4,-3) arc [start angle=270, end angle=90, x radius=0.4, y radius=3];

\filldraw (-4,0) circle [radius=0.05];
\filldraw (4,0) circle [radius=0.05];
\end{tikzpicture}
\caption{} \label{tloczek}
\end{figure}

\begin{proposition} \label{discont}
We assume that  $A = Id$ and
$$
\mu  = \cH^2 \llcorner (S_1\cup S_2) + \cH^1 \llcorner S_3.
$$
We
assume that $Q = Q_1\chi_{S_1} + Q_2\chi_{S_2}$, i.e. $Q|_{S_3} \equiv 0$.
We will require
that
$$
\int_{S_i} Q_i \,d\mu =0,\quad i=1,2.
$$
We set
$$
Q_1(x_1, x_2,x_3) = q(\sqrt{x_2^2 + x_3^2})\quad\hbox{and} \quad
Q_2(x_1, x_2,x_3) = -q(\sqrt{x_2^2 + x_3^2}).
$$
Then, $u$, the solution to (\ref{r1}) in $S$ with $\int_{S_i} u \, d\cH^2 =0$, $i=1,2$ and $\int_{S_3} u \, d\cH^2 =0$ is discontinuous.
\end{proposition}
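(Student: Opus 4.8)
The plan is to use Proposition \ref{pr-roz} and Proposition \ref{pr-sob} to identify the tangent spaces, thereby reducing the relaxed problem \eqref{rel-r1} to a coupled system on the three pieces, and then to exploit the point-like nature of the two junctions to decouple the segment from the disks. On the relative interior of each disk $S_i$, $i=1,2$, the tangent space is the plane spanned by $e_2,e_3$, while on the interior of the segment $S_3$ it is spanned by $e_1$; hence, as in the preceding examples, $A_\mu$ reduces to the two-dimensional Laplacian $\partial_{x_2}^2+\partial_{x_3}^2$ on each disk and to $d^2/ds^2$ on the segment. First I would record that $\ker\nabla_\mu$ is three-dimensional: exactly as in the antenna example of Section \ref{Poincare}, each of the centres $(\pm1,0,0)$ is a point of zero $2$-capacity inside the corresponding disk, so $\chi_{S_1},\chi_{S_2},\chi_{S_3}$ all belong to $\ker\nabla_\mu$ and are linearly independent. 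Consequently the three normalizations $\int_{S_i}u\,d\cH^2=0$ $(i=1,2)$ and $\int_{S_3}u\,d\cH^1=0$ express precisely that $Pu=0$, and since $\langle Q,\chi_{S_i}\rangle=\int_{S_i}Q_i\,d\mu=0$ encodes $Q\perp\ker\nabla_\mu$, Theorem \ref{tw0} furnishes a unique such solution $u$.

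Next I would solve the two disk problems. On $S_1$ the equation is $\Delta u=-Q_1=-q(r)$ with the homogeneous Neumann condition on the bounding circle $\partial S_1\subset\partial\Omega$; since $q$ is radial and $\int_0^1 r q(r)\,dr=0$ (which is exactly the compatibility condition $\int_{S_1}Q_1\,d\mu=0$), the solution is radial, $u|_{S_1}=u_1(r)$. Integrating $(ru_1')'=-rq(r)$ once gives $u_1'(r)=-\frac1r\int_0^r sq(s)\,ds$ (the integration constant vanishing by regularity at the centre), and a second integration with the constant fixed by $\int_0^1 u_1(r)\,r\,dr=0$ determines $u_1$. The data on $S_2$ are the exact negative of those on $S_1$, so by uniqueness $u|_{S_2}=u_2(r)=-u_1(r)$.

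The heart of the argument is the segment. Because $u_1$ is a classical (smooth) solution of the Poisson equation on the disk, integration by parts on $S_1$ produces no contribution at the interior centre point, so $\int_{S_1}\nabla_\mu u\cdot\nabla_\mu\varphi\,d\mu=\int_{S_1}Q_1\varphi\,d\mu$ for every $\varphi\in C^\infty_c(\bR^3)$, and likewise on $S_2$. Subtracting these identities from the weak formulation \eqref{rel-r1} leaves $\int_{S_3}u'\varphi'\,d\cH^1=0$ for all such $\varphi$. Testing with $\varphi$ supported in the interior of $S_3$ yields $u''=0$, and then varying $\varphi$ at the endpoints $(\pm1,0,0)$—whose values are unconstrained precisely because the disks contribute no junction term—forces the natural conditions $u'(\pm1)=0$. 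Hence $u$ is constant on $S_3$, and the normalization $\int_{S_3}u\,d\cH^1=0$ gives $u\equiv0$ on $S_3$.

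It remains to exhibit the jump. A direct computation of $u_1(0)$ from the formula above—for instance with the admissible profile $q(r)=1-\tfrac32 r$, for which $\int_0^1 rq\,dr=0$ and one finds $u_1(0)=\tfrac{7}{120}\neq0$—shows that $u_1(0)\neq0$ for suitable (indeed generic) $q$. Since the continuous representatives satisfy $u|_{S_3}\equiv0$ while $u|_{S_1}(-1,0,0)=u_1(0)\neq0$ and $u|_{S_2}(1,0,0)=-u_1(0)\neq0$, the function $u$ has genuine jumps at the two junction points and is therefore discontinuous. The step I expect to be most delicate is this decoupling of the segment: one must justify rigorously that the zero $2$-capacity of each junction point prevents the disks from transmitting any flux to $S_3$, so that the endpoints of $S_3$ carry the homogeneous Neumann condition rather than a Kirchhoff-type matching condition. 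This is exactly the mechanism, already isolated in Section \ref{Poincare}, by which the weaker inequality \eqref{poincare2} fails to propagate continuity across the junction.
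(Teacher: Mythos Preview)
Your proof is correct and follows essentially the same route as the paper: compute $A_\mu$ on each component, solve the radial Neumann problem on the disks to obtain $u|_{S_2}=-u|_{S_1}$ with $u_1(0)\neq 0$ for suitable $q$, and show $u|_{S_3}\equiv 0$ so that jumps occur at $(\pm1,0,0)$. Your treatment of the segment is in fact more explicit than the paper's---you justify the decoupling via the weak formulation and the zero $2$-capacity of the junction points, whereas the paper simply asserts that on $S_3$ one minimizes $\int_{-1}^1 u_{x_1}^2\,dx_1$ independently---and your concrete choice $q(r)=1-\tfrac32 r$ with $u_1(0)=\tfrac{7}{120}$ replaces the paper's bare claim that $q$ can be chosen so that $b>0$.
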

\begin{proof} We notice that the relaxation of the functional (Proposition \ref{pr3.1}) yields,
$$
A_\mu = (e_2 \otimes e_2 + e_3 \otimes e_3)\chi_{S_1\cup S_2} + (e_1 \otimes e_1) \chi_{S_3}.
$$
Thus, eq. div$A_\mu\nabla_\mu u + Q=0$ becomes
\begin{equation}\label{r-above}
\frac{\partial^2 u}{\partial x_2^2} + \frac{\partial^2 u}{\partial x_2^3} + q =0 \qquad \hbox{on } S_1,
\end{equation}
$$
\frac{\partial^2 u}{\partial x_2^2} + \frac{\partial^2 u}{\partial x_2^3} - q =0 \qquad \hbox{on } S_2
$$
and
$$
\frac{\partial^2 u}{\partial x_1^2} =0 \qquad \hbox{on } S_3.
$$

The radial symmetry of $q$ implies that
\begin{equation}\label{ex-tl}
	0 = \int_{\{x_2^2+x_3^2\le 1\}} Q_1 \,d\mu = 2\pi \int_0^R r q(r) \,dr.
\end{equation}
The above equation (\ref{r-above}),  considered on $S_1$, due to the radial symmetry takes the following form
$$
u_{rr} + \frac 1r u_r + q =0.
$$
After multiplying by $r$ and integration, we obtain,
$$
r u_r (r) = a - \int_0^r s q(s)\,ds.
$$
In order to solve it, we have to pay a bit of attention to the boundary conditions. Since
$\nu = 2^{-1/2}(-1, x_2,x_3)$ for $(-1, x_2,x_3)\in \partial S_1$, then
the Neumann boundary condition
$$
[A_\mu \nabla_\mu u , \nu ]|_{\d\Omega}= 0
$$
takes the form
$$
(0, u_{x_2}, u_{x_3}) \cdot \nu = \nabla_\mu u \cdot {\bf n} =0,
$$
where $ {\bf n}$ is the outer normal to $S_1$ in the plane $x_1 = -1$. We notice that due to (\ref{ex-tl}),
the above boundary conditions are automatically satisfied, provided that $a=0$. Moreover, $a=0$
is necessary for $u$ to be an element of  $H^{1,2}_\mu$.
The formula for the solution is as follows,
$$
u(r) = b - \int_0^r \frac 1\rho \int_0^\rho sq(s)\,dsd\rho.
$$
The zero mean condition imposed on the solution on $S_1$ implies that
$$
b = \frac 2{R^2} \int_0^R r
\int_0^r \frac 1\rho \int_0^\rho sq(s)\,dsd\rho dr.
$$
We may choose $q$ so that $b>0$. Due to the symmetry of $Q$, we deduce that
$u(-1,x_2,x_3) = -u(1,x_2,x_3) $.
Thus, in particular $u(-1,0,0) = b = - u(1,0,0)$.

Since we set $Q|_{S_3} =0$, then we study the problem of minimizing
$$
\int_{-1}^1 u_{x_1}^2\,dx,
$$
 on $S_3$,
where $u \in H^{1,2}(-1,1)$ and $\int_{-1}^1 u\, dx_1 =0$. As a
result  $u|_{S_3} =0$.

We conclude that we constructed a discontinuous solution when the forcing term is continuous.
\end{proof}

Here is another example.
\begin{proposition}
We take
$$
\Omega = B(0,\sqrt 2), \qquad S_1 = \{ x\in\Omega; x_3 = -1\}, \quad S_2 = \{ x\in\Omega; x_1 = 1\},
$$
$\mu = \cH^2 \llcorner (S_1\cup S_2)$ and
$$
A = e_2 \otimes e_2 + \frac 12 (e_1+e_3) \otimes (e_1+e_3) .
$$
Then,
$$
A_\mu = e_2 \otimes e_2 + \frac 12  e_1 \otimes e_1 \chi_{S_1} + \frac 12  e_3 \otimes e_3 \chi_{S_2}
$$
and the boundary conditions take the following form,
\begin{equation}\label{dwa1}
 0 = [A_\nu\nabla_\mu u, \nu_1]|_{\d\Omega} = 2^{-1/2}( \frac 12 x_1 u_{x_1} + x_2 u_{x_2})\qquad\hbox{on } S_1
\end{equation}
\begin{equation}\label{dwa2}
 0 = [A_\nu\nabla_\mu u ,\nu_2]|_{\d\Omega} = 2^{-1/2}( \frac 12 x_3 u_{x_3} + x_2 u_{x_2})\qquad\hbox{on } S_2.
\end{equation}
The condition of orthogonality to the kernel of $\nabla_\mu$ is $\int_{S_i} u \, d\cH^2 =0$.
\end{proposition}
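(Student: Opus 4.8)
The plan is to follow the pattern of the two preceding examples: determine the tangent spaces, read off $A_\mu$ from the projection formula of Proposition \ref{pr3.1}, and then convert the abstract normal trace of Theorem \ref{tw b} into the pointwise expressions \eqref{dwa1} and \eqref{dwa2}.

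First I would fix the geometry and the tangent spaces. Both $S_1$ and $S_2$ are flat $C^2$ disks (the unit disks $x_1^2+x_2^2<1$ in $\{x_3=-1\}$ and $x_2^2+x_3^2<1$ in $\{x_1=1\}$), and they are disjoint, since $x_3=-1$, $x_1=1$, $|x|^2<2$ would force $x_2^2<0$. Hence Proposition \ref{pr-sob} applies and yields $T_\mu(x)=\mathrm{span}\{e_1,e_2\}$ for $\mu$-a.e.\ $x\in S_1$ and $T_\mu(x)=\mathrm{span}\{e_2,e_3\}$ for $\mu$-a.e.\ $x\in S_2$. By Proposition \ref{pr3.1} and the Remark following it, $A_\mu(x)$ is the compression of $A$ to $T_\mu(x)$. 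Writing $A=e_2\otimes e_2+\tfrac12(e_1+e_3)\otimes(e_1+e_3)$ in the standard basis, compressing to $\mathrm{span}\{e_1,e_2\}$ deletes the $e_3$ row and column and leaves $e_2\otimes e_2+\tfrac12 e_1\otimes e_1$ on $S_1$, while compressing to $\mathrm{span}\{e_2,e_3\}$ leaves $e_2\otimes e_2+\tfrac12 e_3\otimes e_3$ on $S_2$; this is precisely the claimed $A_\mu$.

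Next I would compute the boundary functional pointwise. On $S_1$ one has $\nabla_\mu u=(u_{x_1},u_{x_2},0)$, so $A_\mu\nabla_\mu u=(\tfrac12 u_{x_1},u_{x_2},0)$, and on $S_2$ one gets $A_\mu\nabla_\mu u=(0,u_{x_2},\tfrac12 u_{x_3})$. Since $\d\Omega=\d B(0,\sqrt2)$, the outward unit normal at $x$ is $x/\sqrt2$; on $\d S_1=\{(x_1,x_2,-1):x_1^2+x_2^2=1\}$ this is $\nu_1=2^{-1/2}(x_1,x_2,-1)$, and on $\d S_2=\{(1,x_2,x_3):x_2^2+x_3^2=1\}$ it is $\nu_2=2^{-1/2}(1,x_2,x_3)$. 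Taking inner products, $A_\mu\nabla_\mu u\cdot\nu_1=2^{-1/2}(\tfrac12 x_1 u_{x_1}+x_2 u_{x_2})$ and $A_\mu\nabla_\mu u\cdot\nu_2=2^{-1/2}(\tfrac12 x_3 u_{x_3}+x_2 u_{x_2})$, reproducing the right-hand sides of \eqref{dwa1} and \eqref{dwa2}, exactly as in the analogous step of Proposition \ref{discont}.

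The step I expect to be the main obstacle is justifying that these pointwise expressions really represent the Chen--Frid trace $[A_\mu\nabla_\mu u,\nu_i]|_{\d\Omega}$, which Theorem \ref{tw b} furnishes only as a continuous functional on $C^2(\d\Omega)$ and guarantees to vanish. I would argue that on the relative interior of each boundary circle the restriction $u|_{S_i}$ solves a constant-coefficient elliptic equation on a smooth planar domain, so elliptic regularity gives $A_\mu\nabla_\mu u$ continuous up to that portion of $\d\Omega$; there the Gauss--Green identity of Proposition \ref{pr-sl} collapses to the pointwise pairing $A_\mu\nabla_\mu u\cdot\nu_i$, and the vanishing of the functional forces the pointwise condition. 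Finally, for the orthogonality statement I would note that $\nabla_\mu u=0$ forces $u$ to be constant on each disk, and disjointness of $S_1,S_2$ gives $\chi_{S_1},\chi_{S_2}\in\ker\nabla_\mu$ with $\dim\ker\nabla_\mu=2$; by \eqref{P formula} the condition $Pu=0$ is then equivalent to $\int_{S_i}u\,d\cH^2=0$ for $i=1,2$.
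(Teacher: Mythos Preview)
Your proposal is correct and follows essentially the same approach as the paper: identify the tangent spaces via Proposition~\ref{pr-sob}, read off $A_\mu$ from the projection formula~\eqref{rA}, compute $\nabla_\mu u$ and the outward normals $\nu_i = 2^{-1/2}x$ on $\partial S_i$, and take the inner products to obtain \eqref{dwa1}--\eqref{dwa2}. The paper's own proof is in fact terser than yours---it simply records $\bar S_1\cap\bar S_2=\{(1,0,-1)\}$, writes down $\nabla_\mu u$ and $\nu_1,\nu_2$, and declares the result; your additional remarks on elliptic regularity (to pass from the Chen--Frid functional to a pointwise condition) and on why $\dim\ker\nabla_\mu=2$ are justifications the paper leaves implicit.
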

\begin{proof}
Obviously,  $\bar S_1 \cap \bar S_2 = \{(1,0,-1)\}.$
When we take $\mu = \cH^2\llcorner(S_1\cup S_2)$, then we see that an application of the Relaxation Theorem yields

We shall compute  $[A_\mu \nabla_\mu u,\nu]|_{\d\Omega}$, where $\nu$ is normal to $\Omega$. We notice that
$$
\nabla_\mu u = (u_{x_1}, u_{x_2}, 0)\chi_{S_1} + (0, u_{x_2}, u_{x_3})\chi_{S_2}.
$$
Similarly, we see that
$$
\nu_1 = 2^{-1/2}(x_1,x_2, -1),\qquad x_1^2 +x_2^2 =1
$$
on $S_1$ and
$$
\nu_2 = 2^{-1/2}(1,x_2, x_3),\qquad x_3^2 +x_2^2 =1
$$
on $S_2$.

Thus,  $[A_\mu \nabla_\mu u,\nu]|_{\d\Omega}$  leads to the  conclusion that (\ref{dwa1}) and (\ref{dwa2}) hold.
\end{proof}

We may interpret the last computations 
as the lack of dependence of the boundary conditions on $\Omega$.
\begin{corollary}\label{pa-pa}
 Let us suppose $S$ is a $k$-dimensional manifold contained in $\Omega\subset \bR^N$, $k<N$, and the boundary of $\Omega$ is smooth with outer normal $\nu$. Moreover, $A_\mu$ is a relaxation of matrix $A$. Then,
 $[A_\mu \nabla_\mu u ,\nu ]|_{\d\Omega}=0$ on $\partial\Omega$ is equivalent to
 $[A_\mu \nabla_\mu u , \bn]|_{\d S} =0$ on $\partial\Omega\cap\partial S$, where $\bn$ is the normalized projection of $\nu$ to the normal bundle of $S$.
\end{corollary}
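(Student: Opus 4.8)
The plan is to exploit the one structural fact that governs every boundary computation in this section: the field $F:=A_\mu\nabla_\mu u$ is tangent to $S$. Indeed, by \eqref{z5} together with Propositions \ref{pr3.1} and \ref{pr-roz}, $F(x)\in T_\mu(x)=T_S(x)$ for $\mu$-a.e.\ $x$. Consequently the Chen--Frid pairing of Proposition \ref{pr-sl} can detect $\nu$ only through its tangential part, and the whole point is to rewrite that pairing intrinsically on the manifold $S$ and recognise the outcome as the conormal trace on $\partial S$. Throughout I treat $u$ as the solution, so that Theorem \ref{tw b} guarantees the ambient trace $[F,\nu]|_{\d\Omega}$ exists.

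First I would settle the geometry identifying $\bn$. Fix $x\in\partial S\cap\partial\Omega$. Since $\partial S\subset\partial\Omega$ by \textbf{[S]}, one has $T_{\partial S}(x)\subset T_{\partial\Omega}(x)$, which is orthogonal to $\nu(x)$; hence for every $w\in T_{\partial S}(x)$, $(\Pi_\mu(x,\nu),w)=(\nu,w)=0$. Thus $\Pi_\mu(x,\nu)$ lies in the one-dimensional orthogonal complement of $T_{\partial S}(x)$ inside $T_S(x)$, i.e.\ on the conormal line, so $\Pi_\mu(x,\nu)=c(x)\,\bn(x)$ with $\bn$ the outward unit conormal; the interior condition of \textbf{[S]} makes $S$ meet $\partial\Omega$ transversally, giving $c(x)=|\Pi_\mu(x,\nu)|>0$. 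This both justifies the description of $\bn$ in the statement and yields, by tangency of $F$, the pointwise identity $F\cdot\nu=c(x)\,(F\cdot\bn)$.

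The analytic core is a single integration by parts. Starting from Proposition \ref{pr-sl} and using tangency to replace $\nabla\varphi$ by the surface gradient $\nabla_\mu\varphi=\nabla_S\varphi$, I would write, for $\varphi\in C^2(\overline\Omega)$,
\[
\langle[F,\nu]|_{\d\Omega},\varphi\rangle=\langle\di F,\varphi\rangle+\int_\Omega(F,\nabla_S\varphi)\,d\mu .
\]
Applying the surface divergence theorem on the $C^2$ manifold $S$ with boundary $\partial S$ turns $\int_S(F,\nabla_S\varphi)\,d\cH^k$ into $-\int_S(\mathrm{div}_S F)\varphi\,d\cH^k+\int_{\partial S}(F\cdot\bn)\varphi\,d\cH^{k-1}$. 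Because $\partial S\subset\partial\Omega$ lies outside the open set $\Omega$, the interior divergence has $\mu$-density $\mathrm{div}_S F$, so the two volume terms cancel and the relation collapses to
\[
\langle[F,\nu]|_{\d\Omega},\varphi\rangle=\int_{\partial S}(F\cdot\bn)\varphi\,d\cH^{k-1}=\langle[F,\bn]|_{\d S},\varphi\rangle .
\]
Since both functionals are carried by $\partial S\cap\partial\Omega$ and agree there, $[A_\mu\nabla_\mu u,\nu]|_{\d\Omega}=0$ if and only if $[A_\mu\nabla_\mu u,\bn]|_{\d S}=0$, which is the claim; note that this functional identity needs only tangency and not the positivity of $c$, the latter serving only to reconcile the statement with the pointwise form $F\cdot\nu$ used in the examples.

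The step I expect to be the main obstacle is the surface divergence theorem for the merely $L^2(\mu)$ field $F$, together with the very existence of the intrinsic trace $[F,\bn]|_{\d S}$. This is the exact analogue, now on $S$ in place of $\Omega$, of the divergence-measure estimate established in Theorem \ref{tw b}: one needs $\mathrm{div}_S F$ to be a measure of finite total variation so that the machinery of Proposition \ref{pr-sl} applies intrinsically on $S$. I would secure this either by approximating $F$ in $L^2(\mu)$ by smooth tangent fields and passing to the limit in the integration by parts, or by invoking the Chen--Frid theory directly on the manifold $S$; in both cases it is the structural assumption \textbf{[S]}, and in particular $\partial S\subset\partial\Omega$, that guarantees $\di F$ develops no spurious singular part on $\partial S$ inside the open set $\Omega$.
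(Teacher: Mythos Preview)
Your geometric core coincides with the paper's entire proof: the authors simply decompose $\nu=\alpha\bn+\beta\bn^\perp$ with $\bn^\perp\perp T_xS$ and $\alpha\neq 0$ (transversality), observe that $A_\mu\nabla_\mu u\in T_xS$ kills the $\bn^\perp$ component, and conclude $[A_\mu\nabla_\mu u,\nu]=\alpha[A_\mu\nabla_\mu u,\bn]$. That is literally all the paper does --- three lines, treated pointwise, in keeping with the explicit smooth examples that precede the corollary. Your first block (the identification $\Pi_\mu(x,\nu)=c(x)\bn(x)$ with $c>0$, giving $F\cdot\nu=c\,(F\cdot\bn)$) is exactly this argument, phrased slightly differently.

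Where you depart is in adding a functional-analytic layer: you recast the Chen--Frid pairing via the surface divergence theorem on $S$ so that the equality $[F,\nu]|_{\d\Omega}=[F,\bn]|_{\d S}$ holds as functionals, not just pointwise. This is more honest about what the objects actually are (the paper's $[F,\nu]$ is a priori only a functional on $C^2(\partial\Omega)$, cf.\ Proposition~\ref{pr-sl}), and you correctly flag the regularity needed for the intrinsic divergence theorem as the real cost of that rigor. The paper simply does not engage with this; in the context of Section~\ref{examples} the solutions are explicit and smooth, so the pointwise reading is adequate and the authors stop there. Your extra step buys a cleaner statement at the distributional level, at the price of the approximation/trace argument you outline; the paper's version buys brevity by implicitly restricting to the smooth setting of the surrounding examples.
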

\begin{proof}
Since $\bar S$ intersects $\partial\Omega$ transversally, then $\nu$ is not tangential to
$S$, i.e. $\nu = \alpha \bn + \beta \bn^\perp$, $\alpha\neq 0$. Since $A_\mu$ maps $\bR^N$ into the tangent space of $S$, then we can see that,
$$
[A_\mu \nabla_\mu u, \nu]|_{\d\Omega} = [A_\mu \nabla_\mu u,\alpha \bn + \beta \bn^\perp]|_{\d\Omega}
= [\alpha A_\mu \nabla_\mu u,  \bn]|_{\d S}.
$$
The last equality holds because  $\bn^\perp$ is perpendicular to $T_x S$ at $x\in \partial S$.
\end{proof}

\subsection*{Acknowledgement}
The work of the first author was in part supported by the National Science Centre, Poland, through the grant number
2017/26/M/ST1/00700.



\end{document}